\newcommand*\bigcdot{\mathpalette\bigcdot@{.5}}
\newcommand*\bigcdot@[2]{\mathbin{\vcenter{\hbox{\scalebox{#2}{$\m@th#1\bullet$}}}}}
\newcommand{\ZZ}{\mathbb{Z}}
\newcommand{\RR}{\mathbb{R}}
\newcommand{\norm}[1]{\left\lVert#1\right\rVert}
\pgfplotsset{compat=1.10}
\newcommand{\bbeta}{\boldsymbol\beta}
\newcommand{\bmu}{\boldsymbol\mu}
\newcommand{\bn}{\mathbf{n}}
\renewcommand*\env@matrix[1][\arraystretch]{%
  \edef\arraystretch{#1}%
  \hskip -\arraycolsep
  \let\@ifnextchar\new@ifnextchar
  \array{*\c@MaxMatrixCols c}}
\def\doclabel#1{\gdef\@doclabel{#1}}
\def\docdate#1{\gdef\@docdate{#1}}
\def\docauthor#1{\gdef\@docauthor{#1}}
\DeclareMathOperator{\Span}{\text{span}}
\DeclareMathOperator*{\diam}{\text{diam}}
\DeclareMathOperator{\argmin}{\textrm{argmin}}
\DeclareMathOperator{\conv}{\textrm{conv}}
\newcommand{\quaderror}{\mathcal{E}}
\newcommand{\quaderrorlocal}{\mathcal{E}_T}
\newcommand{\quaderrorref}{\hat{\mathcal{E}}}
\newcommand{\Tref}{\hat{T}}
\DeclareMathOperator{\interior}{\textrm{int}}
\newcommand{\galerkin}[1]{\Bar{#1}}
\newcommand{\galerkinsolution}{\galerkin{u}_h}
\newcommand{\galerkinform}[1]{\galerkin{\mathcal{#1}}}
\newcommand{\fembasis}{\mathcal{B}_h}
\newcommand{\bilinear}[3]{#1(#2,#3)}
\DeclareMathOperator{\diag}{\textrm{diag}}
\newcommand{\ext}[1]{{#1}_{\textrm{ext}}}
\email[Max Heldman]{maxh@vt.edu}
\title[A monotone finite element method for reaction-drift-diffusion equations]{A monotone finite element method for reaction-drift-diffusion equations with discontinuous reaction coefficients} 
\author{M. Heldman}
\subjclass[2020]{65N30}
\keywords{finite element method, reaction-drift-diffusion equations, particle-based stochastic reaction-diffusion systems, convergent reaction-diffusion master equation}
\thanks{M.H. was partially supported by the National Science Foundation (NSF-DMS 1902854 and a subgrant of NSF-OAC 2139536) and the Army Research Office (ARO W911NF2010244)}
\date{\today}
\begin{document}

\theoremstyle{definition}
\newtheorem{theorem}{Theorem}[section]
\newtheorem{definition}[theorem]{Definition}
\newtheorem{proposition}[theorem]{Proposition}
\newtheorem{example}[theorem]{Example}
\newtheorem{problem}[theorem]{Problem}
\newtheorem{comment}[theorem]{comment}
\newtheorem{lemma}[theorem]{Lemma}
\newtheorem{assumption}{Assumption}
\newtheorem{corollary}[theorem]{Corollary}
\newtheorem{remark}[theorem]{Remark}

\graphicspath{{img/}}

\begin{abstract}
We prove an abstract convergence result for a family of dual-mesh based quadrature rules on tensor products of simplical meshes. In the context of the multilinear tensor-product finite element discretization of reaction-drift-diffusion equations, our quadrature rule generalizes the mass-lump rule, retaining its most useful properties; for a nonnegative reaction coefficient, it gives an $O(h^2)$-accurate, nonnegative diagonalization of the reaction operator. The major advantage of our scheme in comparison with the standard  mass lumping scheme is that, under mild conditions, it produces an $O(h^2)$ consistency error even when the integrand has a jump discontinuity. The finite-volume-type quadrature rule has been stated in a less general form and applied to systems of reaction-diffusion equations related to particle-based stochastic reaction-diffusion simulations (PBSRD); in this context, the reaction operator is \textit{required} to be an $M$-matrix and a standard model for bimolecular reactions has a discontinuous reaction coefficient. We apply our convergence results to a finite element discretization of scalar drift-diffusion-reaction model problem related to PBSRD systems, and provide new numerical convergence studies confirming the theory.
\end{abstract}

\maketitle



\section{Introduction}\label{section:introduction}
In this paper, we prove and apply convergence results for the finite-volume-type quadrature rule
 \begin{equation}\label{eq:quadrature_scheme}
 \begin{aligned}
 \int_\Omega u_h(x)v_h(x)\lambda_1(x)\lambda_2(x)1_K(x)dx &\approx \sum_{i=1}^N u_iv_i \lambda_1(x_i)\int_{V_i} \lambda_2(x)1_K(x)dx \\&=: \mathcal{Q}(u_h,v_h) ,
 \end{aligned}
 \end{equation} where $K\subseteq \Omega \subseteq \RR^d$ with indicator function $1_K$, $\Omega$ is a polygonal domain, $\{x_i\}_{i=1}^N$ are the vertices of a triangulation $\mathcal{T}_h$ of $\Omega$ (the primal mesh), $\{V_i\}_{i=1}^N$ is a chosen polygonal partition of $\Omega$ (the dual mesh to $\mathcal{T}_h$), $\lambda_1,\lambda_2 : \RR^d\to \RR$ are sufficiently smooth nonnegative functions, and finally $u_h$ and $v_h$ are drawn from a finite-element space $V_h$. Here and throughout this paper, we use the notation $w_h(x_i) = w_i$ for any $w_h\in V_h$. The product $\lambda(x) = \lambda_1(x)\lambda_2(x)$ represents a chosen multiplicative splitting of $\lambda$ into components $\lambda_1,\lambda_2 \geq 0$. We also define $\kappa(x) := \lambda(x)1_K(x)$. In this setup, therefore, the splitting of $\lambda$ is chosen for the convenience of the application, whereas the set $K$ is fixed. 

 The inspiration for this research is a domain-specific family of coupled reaction-drift-diffusion equation \textit{systems} of parabolic partial integro-differential equations (PIDEs) related to particle-based stochastic reaction-diffusion (PBSRD) simulation. In that context, \eqref{eq:quadrature_scheme} was derived and applied in a less general form (i.e., with specific choices of set $K$ and smooth weighting function $\lambda(x)$) in \cites{isaacson_convergent_2013,isaacson_unstructured_2018} and more recently in \cites{isaacson_unstructured_2024,heldman_convergent_2024}, to produce bimolecular reaction rates as part of the convergent reaction-diffusion master equation (CRDME) simulation methodology. In the cited papers, convergence of \textit{statistics} for the system solution were investigated numerically but not analytically. In the present work, we apply our quadrature error estimates for \eqref{eq:quadrature_scheme} to obtain $H^1(\Omega)$ and $L^2(\Omega)$ error estimates for the finite element approximation of the solution to a simplified model problem for PBSRD systems, the scalar reaction-drift-diffusion partial differential equation (PDE). In strong form, the model problem reads as follows: find $u\in H^1(\Omega)$ such that
\begin{equation}\label{eq:reaction-diffusion-parabolic-strong}
\begin{aligned}
    -\nabla \cdot [ \alpha(x)\nabla u(x,t) + u\bbeta ] &= -\kappa(x)u(x,t) + f(x),\qquad &&x\in\Omega,\\
    u(x,t) &= g_D(x)\qquad &&x\in  \Gamma_D, \\
    J(u) \cdot \bn_\Omega(x) &= g_N(x)\qquad &&x\in  \Gamma_N,
\end{aligned}
\end{equation}
where $J(u) = \alpha(x) \nabla u(x,t) + u\bbeta$, $\Gamma_D \cup \Gamma_N = \partial\Omega$, $\bn_\Omega(x)$ is the outward pointing normal vector for $\Omega$, the reaction coefficient $\kappa: \RR^d\to\RR$ is of the same form as in the previous paragraph, and the transport operator coefficients $\alpha : \RR^d \to \RR^d$ and $\bbeta:\RR^d\to\RR^d$, forcing function $f:\RR^d\to \RR$, and boundary data $g_D:\RR^d\to \RR$ and $g_N:\RR^d\to \RR$ are assumed to be sufficiently smooth. In the parlance of the finite element method literature, computing the integrals appearing in the standard Galerkin method for \eqref{eq:reaction-diffusion-parabolic-strong} using quadrature rules such as \eqref{eq:quadrature_scheme} introduces a \textit{consistency} error; in that context, the results in Section \ref{section:functionspacesandquadratureerrors} and \ref{section:refinedconvergenceresults} are referred to as consistency error estimates which lead to the convergence proofs in Section \ref{section:reactiondiffusionequations}. 

A distinguishing property of our quadrature scheme is that neither the primal nor the dual mesh needs to have any explicit relation to $K$. Therefore, the method is targeted toward applications where it is difficult or impossible to align the mesh with the discontinuity. Again, our target application is the CRDME for which the high dimensionality of the PIDE system prevents us from fitting the mesh to the interface. Another possible application includes evolving level-set problems for which it may be more efficient to recompute, or approximately recompute, the discrete reaction operator near the evolving interface $\partial K$ than to adjust the mesh itself. The other main properties of our scheme are its monotonicity and accuracy, producing diagonal discrete reaction operators and quasi-optimal error estimates in $H^1(\Omega)$ and $L^2(\Omega)$ for the finite element approximation of \eqref{eq:reaction-diffusion-parabolic-strong} when combined with an appropriate discretization of the transport operator. It can therefore be viewed as a generalization of the mass-lump method (see, e.g., \cite{thomee_galerkin_2006}). For the CRDME, the nonnegative diagonalization property is not only desirable but \textit{necessary} to the methodology. 

The majority of the paper (Sections \ref{section:basiconvergenceresults} and \ref{section:refinedconvergenceresults}) is devoted to proving convergence rate estimates for \eqref{eq:quadrature_scheme}, particularly local and global $O(h)$ estimates in Lemma \ref{lemma:H1consistency} and local $O(h^2)$ error estimates away from the discontinuity in Lemma \ref{lemma:L2consistency}. Then, in Section \ref{section:refinedconvergenceresults}, we exploit the fact that only a small number of elements lie along the interface to produce improved convergence rates. A straightforward but coarse estimate using H\"older's inequality yields an improved bound applicable to proving supercloseness results between the quadrature solution and the Galerkin solution, while a more refined analysis using a uniform trace inequality over level sets of the signed distance function from the interface $\partial K$ finally results in the $O(h^2)$ error estimate of Corollary \ref{corollary:final_global_error_estimate}. As far as we are aware, a proof of the preliminary estimate Lemma \ref{lemma:H1consistency} has not appeared in the literature, although it uses standard techniques. However, as we discuss in more detail in Remark \ref{remark:related_work}, Lemma \ref{lemma:L2consistency} has appeared in the literature in a less general form in the context of analyzing the close relationship between piecewise linear Galerkin finite element methods and finite-volume-element methods. In particular, \cite{hackbusch_first_1989,liang_symmetric_2002,ma_symmetric_2003,xu_analysis_2009} contain similar analyses toward proving closeness and supercloseness estimates between the solutions of the finite-volume-element method and the Galerkin finite-element method. 

The estimates on the interface are more delicate and are closely related to some results from the papers \cite{zhang_analysis_2008,kashiwabara_finite_2023}, among others, which analyze finite element errors localized to a neighborhood of the boundary of the domain $\Omega$ (so-called boundary skin estimates). In addition to being a novel application of the arguments appearing in the cited sources, Lemma \ref{lemma:uniformtraceinequality} and Proposition \ref{proposition:controlonKh} in Section \ref{section:uniformtraceinequality} give a new proof, and generalization of, a useful result appearing in several of those papers (specifically \cite{kashiwabara_penalty_2016}, Theorem 8.3 and \cite{zhang_analysis_2008}, Lemma 2.1). The upshot of the analysis is that even with local quadrature errors of $O(h^{k-1})$, one can recover $O(h^k)$ convergence rates for the approximate solution as long as the low order errors are localized to a low-dimensional set. Many problems have singularities living on low-dimensional manifolds (as in the current paper), or require modified, lower-order discretizations near a subdomain boundary. Therefore, the arguments in this paper hint at a general principle that we hope to apply in future work.

The remainder of our paper is organized as follows: In Section \ref{section:basiconvergenceresults}, we provide additional details and assumptions related to the meshes (primal and dual) and function space $V_h$ mentioned in the text around \eqref{eq:quadrature_scheme} above. Then, we give a short introduction to the techniques involved in the proofs of the preliminary estimates, and subsequently prove the results. In Section \ref{section:refinedconvergenceresults}, we derive improved global error estimates using the techniques mentioned in the previous paragraph. In Section \ref{section:reactiondiffusionequations}, we briefly describe the application of our quadrature error results to prove the convergence of a finite element scheme for \eqref{eq:reaction-diffusion-parabolic-strong}, mainly by applying known results such as the Strang Lemma to turn the consistency error estimates into global error bounds. In that section we also describe how to modify the discrete drift-diffusion operator in the tensor product case to obtain a fully monotone discretization. In Section \ref{section:numerics}, we give numerical results for a model problem related to PBSRD systems. Finally, Section \ref{section:conclusion} concludes the paper.

\section{Setup and basic convergence results}\label{section:basiconvergenceresults}

Let $\Omega\subseteq \RR^d$ be a polygonal domain, and let $\mathcal{T}_h$ be a family of subdivisions of $\Omega$ indexed by $h$, where $h =\max_{T\in\mathcal{T}_h} \diam T$ is the mesh size of $\mathcal{T}_h$. We restrict $T\in \mathcal{T}_h$ to be a tensor product of simplical elements on one or more domains. Specifically:
     \begin{itemize}
        \item[(i)] $\Omega = \Omega_1 \times \Omega_2 \times ... \times \Omega_J$ for some $J > 0$, where each $\Omega_j \subseteq \RR^{d_j}$ is a polygonal domain with $d = \sum_{j=1}^J d_j$,
        \item[(ii)] $\mathcal{T}_h^{(j)}$ is a simplical subdivision of $\Omega_j$, and  $$\mathcal{T}_h = \left\{\prod_{j=1}^J T_j : T_j \in \mathcal{T}_h^{(j)}\right\}.$$
    \end{itemize}
Given our assumptions on $\mathcal{T}_h$, we observe that each element $T\in \mathcal{T}_h$ has the same number of nodes $$N_T = \prod_{j=1}^J (d_j + 1).$$

We denote the vertices of $\mathcal{T}_h$ by $\{x_i\}_{i=1}^N$, and note that $$x_i = (x^{(1)}_{i_1},x^{(2)}_{i_2},...,x^{(J)}_{i_J}),$$ where $x^{(j)}_{i_j}$, $j=1,2,...,J$, is a vertex of $\mathcal{T}_h^{(j)}$.

 For the purposes of this paper, we will refer to a function $f(x) = f(x_1,x_2,...,x_J)$ on $\Omega$ as \textit{multilinear} on an element $T\in \mathcal{T}_h$ if it is linear in each variable $x_{j} \in T_j$.  For concreteness and clarity of exposition, we will let $V_h$ be the space of piecewise multilinear functions on $\mathcal{T}_h$, although the results are applicable in other cases as well. That is, $$V_h = \Span \{\phi_i\}_{i=1}^N,$$ where each $\phi_i$ is defined as $$\phi_i(x_1,x_2,...,x_J) = \prod_{j=1}^J \phi^{(j)}_{i_j}(x_j),\qquad x_j\in \RR^{d_j}$$ for $\phi^{(j)}_{i_j}$ a piecewise-linear hat function on $\mathcal{T}_h^{(j)}$. 

Next, we associate to each vertex $x^{(j)}_i$ a polyhedral control volume $V^{(j)}_i$, such that together the $V_i^{(j)}$ partition $\Omega_j$, and we let $$V_i =  V_{i_1}^{(1)} \times V_{i_2}^{(2)}  \times ... \times V_{i_J}^{(J)}$$ for each $i = 1,2,...,N.$ We call the resulting subdivision $\{V_i\}_{i=1}^N$ of $\Omega$ the dual mesh of $\Omega$, and we call the $V_i$ voxels or dual mesh elements. From the viewpoint of the results we derive in the next section, it suffices for the voxels to obey the following two rules:
\begin{enumerate}
\item[1.] Each voxel $V_i$ contains a single vertex $x_i$ on its interior.
\item[2.]  \begin{align}\label{eq:voxel-identities}
    |V_i \cap T| = \frac{|T|}{N_T}1_{x_i\in T} \quad\text{so that}\quad |V_i| = \sum_{T \ni x_i} \frac{|T|}{N_T}.
\end{align}
\end{enumerate}
We note that these two properties imply that $T\cap V_i \neq \emptyset$ if and only if $x_i \in T$.  Moreover, for any function $f$ which is piecewise multilinear on $\mathcal{T}_h$, we have \begin{align*}
\int_\Omega f(x)dx = \sum_{T\in\mathcal{T}_h}   \frac{|T|}{N_T} \sum_{x_i\in T} f(x_i) &=
\sum_{T\in\mathcal{T}_h}   \sum_{x_i\in T} |V_i\cap T| f(x_i) \\& = \sum_{i=1}^N \sum_{T\ni x_i} |V_i \cap T| f(x_i) = \sum_{i=1}^N f(x_i)|V_i|.\end{align*} For any $f\in C(T)$, we refer to the quadrature rule \begin{equation}\label{eq:quad_mass_lump}
\int_{\Omega} f(x)dx \approx \mathcal{M}(f) := \sum_{i=1}^N f(x_i)|V_i| =: \sum_{T\in\mathcal{T}_h} \mathcal{M}_{h,T}(f).
\end{equation} resulting from the previous display as the \textit{mass lump quadrature rule}. 

\begin{remark}\label{remark:mass-lumping-to-voxel-sum}
After rewriting the righthand side of \eqref{eq:quadrature_scheme}  as \begin{align*}\sum_{i=1}^N u_iv_i \lambda_1(x_i)\int_{V_i} \lambda_2(x)1_K(x)dx &= \sum_{i=1}^N \sum_{T\in\mathcal{T}_h}  u_iv_i \lambda_1(x_i)\int_{V_i \cap T} \lambda_2(x)1_K(x)dx
\\&= \sum_{T\in\mathcal{T}_h}\sum_{x_i\in T} u_iv_i \lambda_1(x_i)\int_{V_i\cap T}   \lambda_2(x)1_K(x)dx,\end{align*} we see that \eqref{eq:quadrature_scheme} applies $\mathcal{M}_{h,T}$ on each element $T\in\mathcal{T}_h$ such that $\lambda_2$ and $1_K$ are constant on $T$ (e.g., $\partial K\cap \interior T = \emptyset$ and the splitting of $\lambda$ is chosen so that $\lambda_2 \equiv 1$). Similarly, if $\lambda_1$ and $1_K$ are constant on $T$, then we recover a finite-volume-type averaging quadrature rule.
\end{remark}

\begin{figure}[h]
    \centering
    \includegraphics[scale=.2]{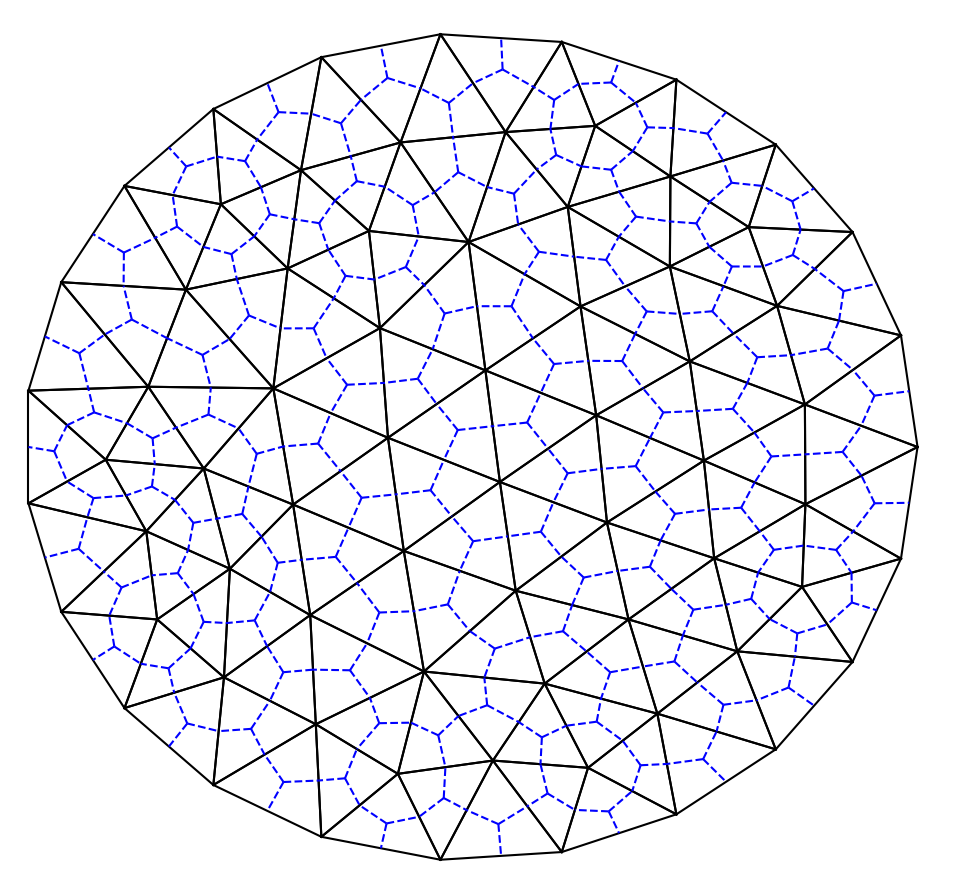}\hspace{10pt}\includegraphics[scale=.25]{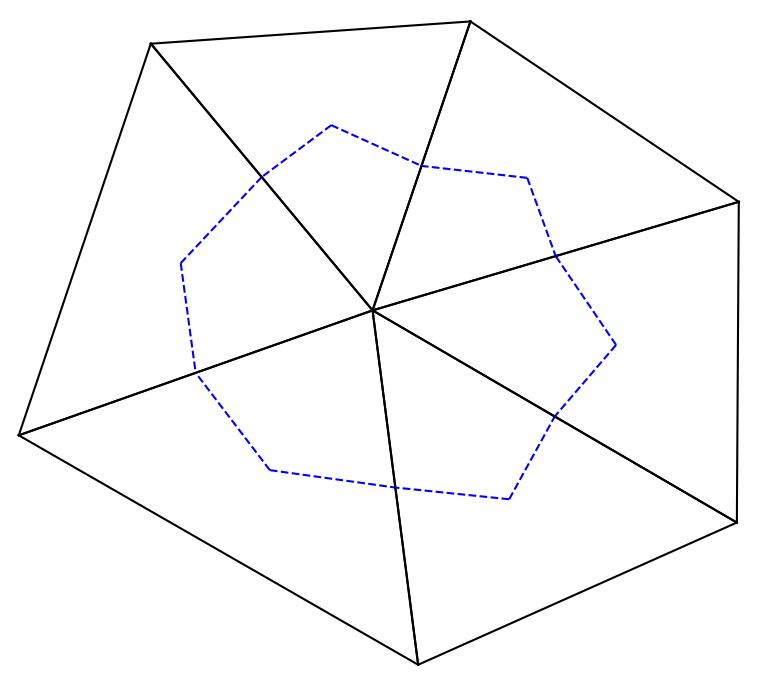}
    \caption{Left: A finite element primal mesh on a disk (black) and its dual (blue, dashed). Right: the supporting primal and dual mesh elements for an interior mesh node.}
    \label{fig:dual_mesh}
\end{figure}

In practice, we choose for the dual mesh of each $\mathcal{T}^{(j)}_h$ the barycentric dual (Definition \ref{def:barycentric_dual} below). In Definition \ref{def:barycentric_dual} and throughout the rest of the paper, we will use the notation $\conv A$ for the convex hull of a subset $A$ of a Banach space.

\begin{definition}\label{def:barycentric_dual}
    Let $\mathcal{S}_h$ be a triangulation of a domain $D\subseteq \RR^n$ into $n$-simplices and let $\{x_i\}_{i=1}^N$ be the nodes of $\mathcal{S}_h$. The \textit{barycentric dual} of $\mathcal{S}_h$ is the collection of voxels $\{V_i\}_{i=1}^N$ defined element-wise as $$V_i\cap T = \conv \{x : \textrm{$x$ is the barycenter of any face of $T$ containing $x_i$}\}.$$  Here a \textit{face} of $T$ is the simplex generated by any subset of vertices of $T$ (i.e., $T$ itself, along with, e.g., all edges and vertices of $T$ for $n = 2$, and all (geometric) faces as well for $n=3$). An example of a barycentric dual mesh (blue, dashed) resulting from a triangulation (black) of a disk domain is shown on the left in Figure \ref{fig:dual_mesh}, along with a close up of the primal and dual mesh elements supporting a single primal mesh node. 
\end{definition}

The barycentric dual mesh appears prominently in the finite volume and finite volume-element literature for structured and simplical unstructured meshes because it is convenient to compute and satisfies \eqref{eq:voxel-identities}. Both \cite{hackbusch_first_1989} and \cite{ewing_accuracy_2002}, for example, mention specifically the property \eqref{eq:voxel-identities}, and \cite{hackbusch_first_1989,ewing_accuracy_2002,liang_symmetric_2002,ma_symmetric_2003,xu_analysis_2009} all use the property mentioned in Remark \ref{remark:mass-lumping-to-voxel-sum} in their convergence analysis. The listed papers are focused on analyzing the finite-volume-element method as a perturbation of the finite element method, whereas we view \eqref{eq:quadrature_scheme} as a quadrature scheme which can be applied in an ad hoc way to particular terms of a PDE. In Remark \ref{remark:related_work} we discuss results from these works in more detail and compare them to the results we present here. 
    


\subsection{Function spaces and quadrature errors}\label{section:functionspacesandquadratureerrors}

Through the remainder of the paper, for any open set $D\subseteq \RR^d$ we will denote by $W^{k,p}(D)$  the usual Sobolev space consisting of functions whose derivatives of order $\leq k$ are in $L^p(D)$, with $H^k(D) = W^{k,2}(D)$. We use $\norm{\cdot}_{k,p,D}$ to denote the usual Sobolev space norm on $W^{k,p}(D)$ and $\norm{\cdot}_{k,D}$ the corresponding norm on $H^k(D)$. For the Sobolev space seminorm associated with $W^{k,p}(D)$ or $H^k(D)$, i.e.,  $$|f|_{k,p,K} = \left(\sum_{|\alpha| = k} \int_D |\partial^\alpha f|^p \right)^{\frac{1}{p}},$$ we write $|\cdot|_{k,p,D}$ and $|\cdot|_{k,D}$. Note, the definitions of the norms also apply for $k < 0$, in which case they refer to a dual norm or seminorm. We use $\left<\cdot,\cdot\right>_D$ to denote the $L^2$-inner product on $D$. Where there is no ambiguity in any of the above notations, we will occasionally drop the dependence on the domain $D$. The spaces $C^k(D)$ denote spaces of functions with continuous derivatives up to order $k$ on $D$. 

For any approximation (e.g., a quadrature scheme) $\mathcal{Q}(f)$ or $\mathcal{Q}(f,g)$ of a linear or bilinear functional (e.g., an integral), we can associate an error functional $\mathcal{E}(f)$ or bilinear form $\mathcal{E}(f,g)$ representing the difference between the exact value of the functional and the approximation. For convenience, we will occasionally overload notation, such that $\mathcal{E}(fg)$ and $\mathcal{E}(f,g)$ refer to the same error functional depending on whether we want to view $\mathcal{E}$ as a linear functional of $fg$ or a bilinear functional of $f$ and $g$. We also drop arguments of $\mathcal{E}$ that we want to view as fixed parameters. When necessary, we will use the notation $\mathcal{E}_D$ to denote the restriction of the error functional to a particular subset $D$ of the original domain of its arguments; for example, for the mass lump rule \eqref{eq:quad_mass_lump} we would have \begin{align*}
    \mathcal{E}(f) &= \bar{\mathcal{M}}_h(f) - \mathcal{M}_h(f) =  \int_{\Omega} f(x) dx - \sum_{T\in\mathcal{T}_h} \sum_{x_i\in\mathcal{T}} \frac{|T|}{N_T} f_{i},
\end{align*}
and, for any $T\in\mathcal{T}_h$, 
\begin{align*}
    \mathcal{E}_T(f) &= \int_{T} f(x) dx - \sum_{x_i\in T} \frac{|T|}{N_T} f_{i}.
\end{align*}
 For $T\in\mathcal{T}_h$, we refer to $\mathcal{E}_T(f)$ and $\mathcal{E}(f)$ as the local and global error functionals, respectively. The initial error estimates of Section \ref{section:error-estimates} will use the typical procedure of first bounding the local error on a \textit{reference element} $\hat{T}$ of unit size in terms of Sobolev space norms of its arguments. The reference element is chosen so that for each $T\in\mathcal{T}_h$, there exists an multi-affine (defined here in analogy with multilinear) transformation $G_T:\hat{T}\to T$. We refer to the reference element functional or bilinear form as $\hat{\mathcal{E}}$. To map a local error functional $\mathcal{E}_T$ onto the reference error functional $\hat{\mathcal{E}}$, we employ a change of variables which involves composing arguments $f$ of $\mathcal{E}$ with $G_T$. We denote a transformed function $f\circ G_T$ by $\hat{f}$. Since $G_T$ is multi-affine, if $f$ is multilinear on $T$ then $\hat{f}$ is multilinear on $\hat{T}$. By a scaling argument, if $f\in W^{k,p}(T)$ we have that \begin{equation}\label{eq:estimate_seminorm}
|\hat{f}|_{k,p,\hat{T}} \leq Ch^k |f|_{k,p,T}.    
\end{equation} Therefore, we gain powers of $h$ in our error estimates by applying the Bramble-Hilbert lemma on the reference element to change Sobolev space norms into semi-norms, and then scaling back to the original element. For convenience, we state a version of the Bramble-Hilbert lemma below which applies to both linear and bilinear forms. 

\begin{lemma}[Bramble-Hilbert for bilinear forms]\label{lemma:bramble_hilbert_bilinear}
Let $D\subseteq \RR^n$ have Lipschitz-continuous boundary. For some integer $k\geq 0$ and some $p\in [1,\infty]$, let $G$ be a continuous bilinear functional on $W^{k,p}(D) \times W^{\ell, q}(D)$ such that  \begin{align*}
    G(f,\cdot) &= 0 \quad \text{for all $f \in P_k$},\\
    G(\cdot,g) &= 0 \quad \text{for all $g \in P_\ell$}.
\end{align*} where $P_k$ denotes the set of polynomials of degree $\leq k$. Then there exists a constant $C(D)$ such that $$|G(f,g)| \leq C(D,G)|f|_{k+1,p,D}|g|_{\ell+1,q,D} \quad \text{for all $f\in W^{k,p}(D), g\in W^{\ell,q}(D)$}.$$
\end{lemma}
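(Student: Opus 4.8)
The plan is to reduce the claim to the classical scalar Bramble-Hilbert / Deny-Lions estimate, applied separately in each of the two arguments, using bilinearity to subtract approximating polynomials from both slots simultaneously.

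The only analytic input is the Deny-Lions lemma: for a domain $D$ with Lipschitz boundary there is a constant $C_1(D)$ (depending also on $k$ and $p$) such that $\inf_{p\in P_k}\norm{f-p}_{k+1,p,D}\le C_1(D)\,|f|_{k+1,p,D}$ for all $f\in W^{k+1,p}(D)$, together with its counterpart giving a constant $C_2(D)$ for the pair $(\ell,q)$. I would invoke this as a known result; its standard proof is a compactness argument on the finite-codimension quotient $W^{k+1,p}(D)/P_k$, on which both the quotient norm and the seminorm $|\cdot|_{k+1,p,D}$ are norms, their equivalence following from the compact embedding $W^{k+1,p}(D)\hookrightarrow\hookrightarrow W^{k,p}(D)$ (valid for Lipschitz $D$) together with the Peetre-Tartar lemma.

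The core step is then short. Since $W^{k+1,p}(D)\hookrightarrow W^{k,p}(D)$ and $W^{\ell+1,q}(D)\hookrightarrow W^{\ell,q}(D)$ continuously, the continuity hypothesis on $G$ furnishes a constant $M=M(G)$ with $|G(\tilde f,\tilde g)|\le M\norm{\tilde f}_{k+1,p,D}\norm{\tilde g}_{\ell+1,q,D}$. Fix $f\in W^{k+1,p}(D)$ and $g\in W^{\ell+1,q}(D)$ (if either seminorm on the right-hand side is infinite the asserted inequality is trivial). For arbitrary $p\in P_k$ and $q\in P_\ell$, bilinearity and the two vanishing hypotheses give $G(f,g)=G(f-p,g)=G(f-p,g-q)$, hence
\[
|G(f,g)|\le M\,\norm{f-p}_{k+1,p,D}\,\norm{g-q}_{\ell+1,q,D}.
\]
Minimizing over $p\in P_k$ and over $q\in P_\ell$ and applying Deny-Lions in each factor yields
\[
|G(f,g)|\le M\,C_1(D)\,C_2(D)\,|f|_{k+1,p,D}\,|g|_{\ell+1,q,D},
\]
that is, the claim with $C(D,G)=M\,C_1(D)\,C_2(D)$.

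I do not anticipate a real obstacle; the only things to watch are the index bookkeeping (keeping straight which Sobolev order appears in which estimate) and the fact that the kernel of $|\cdot|_{k+1,p,D}$ equals $P_k$ only when $D$ is connected, so for a general polygonal $D$ one would argue componentwise --- in our applications $D$ is always a connected reference element, so this point is moot. One tempting alternative, iterating the scalar Bramble-Hilbert lemma by first freezing $g$ and then freezing $f$, is messier because it forces one to control the operator norm of the partially evaluated functional and re-enter the estimate; subtracting polynomials from both slots at once, as above, sidesteps that entirely.
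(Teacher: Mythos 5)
The paper does not actually prove this lemma; it is stated as a known result with a pointer to Ciarlet (Chapters 3--4), so there is no in-paper argument to compare against. Your proof is correct and is the standard one: by bilinearity and the two vanishing hypotheses, $G(f,g)=G(f-p,g-q)$ for every $p\in P_k$, $q\in P_\ell$, and then continuity plus the Deny--Lions estimate in each slot gives the seminorm bound. You are also right that this symmetric double-subtraction is cleaner than the ``freeze one argument'' route found in some textbook versions (which is tailored to the case where one factor space is finite-dimensional and requires controlling the norm of the partially evaluated functional), and you correctly flag the two points where care is needed: the kernel of $|\cdot|_{k+1,p,D}$ equals $P_k$ only on a connected domain, and functions merely in $W^{k,p}(D)$ are handled by the convention that the higher-order seminorm is $+\infty$ when the corresponding weak derivatives fail to be $L^p$ functions. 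The only cosmetic issue is the clash of notation between the Lebesgue exponents $p,q$ and the approximating polynomials $p\in P_k$, $q\in P_\ell$, which you should rename before inserting the argument anywhere.
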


More details and background on the type of argument summarized above can be found in many classical finite element method references, including \cite{ciarlet_finite_2002}, Chapter 3 and 4.

We note that, while our quadrature error estimates do not require any mesh restrictions, the finite element convergence theorems proven in Section \ref{section:reactiondiffusionequations} require the existence of an interpolation operator $I_h: H^k(\Omega)\to V_h$ with error estimate \begin{equation}\label{eq:interpolation_operator}\left(\sum_{T\in\mathcal{T}_h} |v - I_hv|_{m,p,T}^p\right)^{\frac{1}{p}} \leq Ch^{k-m}|v|_{k,p,\Omega}.\end{equation} For this one typically requires a nondegeneracy condition \cite{scott_finite_1990}: \begin{equation}\label{eq:nondegeneracycondition}
\sup_{T\in\mathcal{T}_h} \frac{\diam T}{\rho_T} \leq \gamma_0,
\end{equation} where $\gamma_0$ is independent of $h$ and $\rho_T$ is the radius of the largest ball contained in $T$. The bound \eqref{eq:nondegeneracycondition} implies the existence of a constant $C$ such that the operator norm of the inverse transformation $G_T^{-1}$ is bounded by $Ch^{-1}$. The constant in \eqref{eq:interpolation_operator} depends on $\gamma_0$, among other quantities. 

\subsection{Error estimates}\label{section:error-estimates}


For any $u_h,v_h\in V_h$, define \begin{align*} \mathcal{E}(u_h, v_h; \lambda_1, \lambda_2, K) &= \int_\Omega \kappa(x)u_h(x)v_h(x)dx - \mathcal{Q}(u_h, v_h),
\end{align*} where $\mathcal{Q}$ is as in \eqref{eq:quadrature_scheme}. 


The proof of our first $O(h)$ consistency error estimate uses a relatively straightforward application of the Bramble-Hilbert lemma, the basic idea being that the quadrature is exact if $\lambda u_hv_h$ is a constant on each element $T$. We include a detailed version here for completeness. In later proofs, which follow a similar line of argument, we will omit some of the details included here. 

\begin{lemma}\label{lemma:H1consistency}
        Assume that $\lambda_1\in W^{1,\infty}(\Omega)$ and $\lambda_2 \in L^\infty(K)$. Then, there exists a constant $C$ such that for any $u_h,v_h\in V_h$, 
    \begin{enumerate}
        \item[(i)] For each $T\in\mathcal{T}_h$, the local quadrature error satisfies \begin{equation}\label{eq:H1localconsistencysmooth}
|\quaderrorlocal (\kappa u_h v_h) | \leq Ch \norm{\lambda_1}_{1,\infty,T}\norm{\lambda_2}_{0,\infty,T} \norm{u_hv_h}_{1,1,T},
\end{equation}
        \item[(ii)] The global quadrature error satisfies \begin{equation}\label{eq:H1globalconsistencysmooth}
|\quaderror (\kappa u_h v_h) | \leq Ch \norm{\lambda_1}_{1,\infty} \norm{\lambda_2}_{0,\infty}\norm{u_h}_{1}\norm{v_h}_{1}.
\end{equation}
    \end{enumerate}
\begin{proof}
    We first restrict to the local error functional for $T\in\mathcal{T}_h$: \begin{equation*}
        \quaderrorlocal(u_hv_h)  = \int_{T} \kappa(x)u_h(x)v_h(x)dx - \sum_{x_i \in T} u_iv_i\lambda_1(x_i) \int_{V_i \cap T} \lambda_2(x)1_K(x)dx,
    \end{equation*}
    and map to the reference element: 
    \begin{equation*}
        \quaderrorref(\hat{\lambda}_1 \hat{u}_h\hat{v}_h)  = \int_{\Tref} \hat{\kappa}(x)\hat{u}_h(x)\hat{v}_h(x)dx - \sum_{x_i \in T} u_iv_i\lambda_1(x_i) \int_{\widehat{V_i \cap T}} \hat{\lambda}_2(x)\hat{1}_K(x)dx.
    \end{equation*}
    Now, evidently we have the bound $$|\quaderrorref(f)| \leq C\norm{\lambda_2}_{0,\infty,\hat T}\norm{f}_{1,\infty,\hat T}$$ for any $f\in W^{1,\infty}(\Tref)$. Moreover, $\quaderrorref(f) = 0$ if $f$ is constant. Therefore, we can apply the Bramble-Hilbert lemma to obtain 
    \begin{align*}
     |\quaderrorref(\hat{\lambda}_1\hat{u}_h\hat{v}_h)| &\leq C\norm{\lambda_2}_{0,\infty,T}|\hat{\lambda}_1\hat{u}_h\hat{v}_h|_{1,\infty,T} \\&\leq C\norm{\lambda_2}_{0,\infty,T}(|\hat{\lambda}_1|_{0,\infty} |\hat{u}_h\hat{v}_h|_{1,\infty} + |\hat{\lambda}_1|_{1,\infty} |\hat{u}_h\hat{v}_h|_{0,\infty}) 
     \\& \leq  C\norm{\lambda_2}_{0,\infty,T}(|\hat{\lambda}_1|_{0,\infty} |\hat{u}_h\hat{v}_h|_{1,1} + |\hat{\lambda}_1|_{1,\infty} |\hat{u}_h\hat{v}_h|_{0,1})
    \end{align*} where in the last inequality we have used the equivalence of norms on a finite-dimensional space. Thus, mapping back to $T$ we apply \eqref{eq:estimate_seminorm} to obtain \begin{align*}
     |\quaderrorlocal(u_hv_h)| &\leq C|T||\quaderrorref^1(\hat{u}_h\hat{v}_h)| 
     \\&\leq C|T|\norm{\lambda_2}_{0,\infty,T}(|\hat{\lambda}_1|_{0,\infty} |\hat{u}_h\hat{v}_h|_{1,1} + |\hat{\lambda}_1|_{1,\infty} |\hat{u}_h\hat{v}_h|_{0,1})
 \\&\leq Ch\norm{\lambda_2}_{0,\infty,T}(|\lambda_1|_{0,\infty,T} |u_hv_h|_{1,1,T} + |\lambda_1|_{1,\infty,T} |u_hv_h|_{0,1,T})
 \\ &\leq Ch\norm{\lambda_2}_{0,\infty,T}\norm{\lambda_1}_{1,\infty,T}\norm{u_hv_h}_{1,1,T}
    \end{align*} which establishes \eqref{eq:H1localconsistencysmooth}. Summing over the elements then yields the desired global error bound \eqref{eq:H1globalconsistencysmooth}:
    \begin{align*}|\quaderror(u_hv_h)|& = \left|\sum_{T\in\mathcal{T}_h} \quaderrorlocal(u_hv_h) \right| \leq Ch\norm{\lambda_2}_{0,\infty,\Omega}\norm{\lambda_1}_{1,\infty,\Omega}\sum_{T\in\mathcal{T}_h} |u_hv_h|_{1,1,T} \\&= Ch\norm{\lambda_2}_{0,\infty,\Omega}\norm{\lambda_1}_{1,\infty,\Omega}|u_hv_h|_{1,1,\Omega}
    \\&\leq Ch\norm{\lambda_2}_{0,\infty,\Omega}\norm{\lambda_1}_{1,\infty,\Omega}\norm{u_h}_{1,\Omega}\norm{v_h}_{1,\Omega}.\end{align*}
\end{proof}
\end{lemma}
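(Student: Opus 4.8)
The plan is to run the standard reference-element argument for quadrature errors, which reduces everything to a Bramble--Hilbert estimate on a fixed element. First I would localize: since $\quaderror(\kappa u_h v_h)=\sum_{T\in\mathcal{T}_h}\quaderrorlocal(\kappa u_h v_h)$, part (ii) follows from part (i) by summation, so the work is in the local bound \eqref{eq:H1localconsistencysmooth}. Fixing $T\in\mathcal{T}_h$, I would pull the local error functional back to the reference element $\Tref$ through the multi-affine map $G_T$. Because both the exact integral $\int_T\kappa u_h v_h\,dx$ and each quadrature weight $\int_{V_i\cap T}\lambda_2 1_K\,dx$ transform with the Jacobian $|\det DG_T|\approx|T|$, this yields $|\quaderrorlocal(\kappa u_h v_h)|\le C|T|\,|\quaderrorref(\hat{\lambda}_1\hat{u}_h\hat{v}_h)|$.

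The key observation on $\Tref$ is that $\quaderrorref$, viewed as a functional of the \emph{single} function $g:=\hat{\lambda}_1\hat{u}_h\hat{v}_h$ (with the weight $\hat{\lambda}_2\hat{1}_K$ held fixed), is a mass-lump-type rule that is exact on constants: if $g\equiv c$, the quadrature equals $c\sum_i\int_{\widehat{V_i\cap T}}\hat{\lambda}_2\hat{1}_K=c\int_{\Tref}\hat{\lambda}_2\hat{1}_K$, since the sets $\widehat{V_i\cap T}$ partition $\Tref$. It is also crudely bounded, $|\quaderrorref(g)|\le C\norm{\lambda_2}_{0,\infty}\norm{g}_{1,\infty,\Tref}$, by estimating the integral and the sum term by term. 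Applying the Bramble--Hilbert lemma (to $\quaderrorref$ as a linear functional on $W^{1,\infty}(\Tref)$ vanishing on $P_0$) then upgrades this to $|\quaderrorref(g)|\le C\norm{\lambda_2}_{0,\infty}|g|_{1,\infty,\Tref}$. The structural point is that $\lambda_1$ must be absorbed into $g$ before this step: since $\lambda_1\in W^{1,\infty}$ is not constant, $\quaderrorref$ does \emph{not} vanish when $\hat{u}_h$ or $\hat{v}_h$ alone is constant, so one cannot apply the bilinear form of Bramble--Hilbert directly to the pair $(\hat{u}_h,\hat{v}_h)$.

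Next I would expand the seminorm by the product rule, $|g|_{1,\infty,\Tref}\le\norm{\hat{\lambda}_1}_{0,\infty}|\hat{u}_h\hat{v}_h|_{1,\infty,\Tref}+|\hat{\lambda}_1|_{1,\infty,\Tref}\norm{\hat{u}_h\hat{v}_h}_{0,\infty,\Tref}$, and then trade the $L^\infty$ (semi)norms of $\hat{u}_h\hat{v}_h$ for $L^1$ ones using equivalence of norms on the fixed finite-dimensional space of products of pairs of multilinear functions on $\Tref$ (the first-order seminorms being norms there once constants are quotiented out). Finally I would scale back to $T$ by a change of variables: $\norm{\hat{\lambda}_1}_{0,\infty}$ is affine invariant, $|\hat{\lambda}_1|_{1,\infty,\Tref}\le Ch|\lambda_1|_{1,\infty,T}$ by \eqref{eq:estimate_seminorm}, and the $L^1$ seminorms of $\hat{u}_h\hat{v}_h$ scale so that, combined with the $|T|$ prefactor, the powers of $h$ collapse to exactly $h^1$; collecting terms gives \eqref{eq:H1localconsistencysmooth}. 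Then \eqref{eq:H1globalconsistencysmooth} follows by summing over $T$, using $\sum_{T}\norm{u_hv_h}_{1,1,T}=\norm{u_hv_h}_{1,1,\Omega}$, the bound $\norm{u_hv_h}_{1,1,\Omega}\le C\norm{u_h}_{1,\Omega}\norm{v_h}_{1,\Omega}$ (Cauchy--Schwarz after the product rule), and $\norm{\lambda_1}_{1,\infty,T}\le\norm{\lambda_1}_{1,\infty,\Omega}$, $\norm{\lambda_2}_{0,\infty,T}\le\norm{\lambda_2}_{0,\infty,K}$.

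I do not anticipate a genuine obstacle: this is a textbook Bramble--Hilbert-plus-scaling argument. The two places demanding care are the structural choice to fold $\lambda_1$ into the test function so that the $P_0$-reproduction hypothesis actually holds, and the bookkeeping of powers of $h$ in the change of variables, where the element-volume factor $|T|\sim h^d$ must be matched against the seminorm scaling of $\hat{u}_h\hat{v}_h$ so as to land precisely on $O(h)$.
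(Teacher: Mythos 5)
Your proposal is correct and follows essentially the same route as the paper's proof: localize, pull back to the reference element, apply Bramble--Hilbert to the linear functional acting on the single function $\hat{\lambda}_1\hat{u}_h\hat{v}_h$ (which vanishes on constants because the sets $\widehat{V_i\cap T}$ partition $\Tref$), expand by the product rule, trade $L^\infty$ for $L^1$ seminorms by finite-dimensional norm equivalence, scale back, and sum. Your explicit remark on why $\lambda_1$ must be folded into the argument before invoking Bramble--Hilbert is a correct reading of the same structural point implicit in the paper's argument.
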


Our next result, Lemma \ref{lemma:L2consistency}, establishes a stronger local error estimate on elements that do not intersect the region of discontinuity. 

\begin{lemma} \label{lemma:L2consistency}
    Assume that $\lambda \in W^{1,\infty}(\Omega)$. Then, there exists a constant $C$ such that on each element $T$ such that $\interior T \cap \partial K = \emptyset$, \begin{equation}\label{eq:L2localconsistencysmooth}
|\quaderror_T (\kappa u_h v_h) | \leq Ch^2\norm{\lambda_2}_{1,\infty,T}\norm{\lambda_1}_{2,\infty,T}\norm{u_h}_{1,T}\norm{v_h}_{1,T} \quad \text{ for all } u_h,v_h\in V_h.  
    \end{equation}

\begin{proof}
    We observe that since $\interior T \cap \partial K = \emptyset$, $1_K(x) \equiv 0$ or $1_K(x) \equiv 1$ on $T$, i.e., $K\cap T = \emptyset$ or $T\subseteq K$. Evidently the error is zero if $K\cap T=\emptyset$, so we only consider the case $T\subseteq K$. We break the error functional $\quaderrorlocal$ into three parts: $$\quaderrorlocal(\kappa u_h v_h) = \quaderrorlocal^1(\lambda_2, \lambda_1 u_hv_h) + \quaderrorlocal^2(\lambda_1 u_hv_h; \lambda_2) + \quaderrorlocal^3(\lambda_2, \lambda_1 u_hv_h),$$ where
     \begin{equation}    \label{eq:smootherr1}
    \begin{aligned}
         \quaderrorlocal^1(\lambda_2, \lambda_1 u_hv_h) &= \int_T \lambda(x) u_h(x)v_h(x) dx \\&\qquad\qquad - \frac{1}{|T|}\left(\int_T \lambda_2(x)dx\right)\left(\int_T (\lambda_1u_hv_h)(x)dx\right)
   \end{aligned}
 \end{equation}
     \begin{equation}    \label{eq:smootherr2}
    \begin{aligned}
        \quaderrorlocal^2(\lambda_1 u_hv_h) &= \frac{1}{|T|}\left(\int_T \lambda_2(x)dx\right)\left(\int_T (\lambda_1u_hv_h)(x)dx\right) \\&\qquad\qquad-  \left(\int_T \lambda_2(x)dx\right)\left(\frac{1}{N_T}\sum_{x_i\in T} \lambda_1(x_i)u_{i}v_{i}\right)   
        \end{aligned}
 \end{equation}
      \begin{equation}    \label{eq:smootherr3}
    \begin{aligned}
        \quaderrorlocal^3(\lambda_2, \lambda_1 u_hv_h) &=\left(\int_T \lambda_2(x)dx\right)\left(\frac{1}{N_T}\sum_{x_i\in T}\lambda_1(x_i) u_{i}v_{i}\right) \\&\qquad\qquad-  \sum_{x_i\in T} \lambda_1(x_i)u_{i}v_{i}\int_{V_{i}\cap T} \lambda_2(x)dx.
        \end{aligned}
 \end{equation}

\begin{enumerate}
    \item[1.] To estimate $\quaderrorlocal^1$, we map to the reference element $\hat{T}$ to obtain the reference element error functional \begin{align*}
        \quaderrorref^1(\hat{\lambda}_2,\hat{\lambda}_1\hat{u}_h\hat{v}_h) &= \int_{\hat{T}} \hat{\lambda}(x) \hat{u}_h(x) \hat{v}_h(x) dx \\&\qquad\qquad- \frac{1}{|\hat{T}|}\left(\int_{\hat{T}} \hat{\lambda}_2(x)dx\right)\left(\int_{\hat{T}}(\hat{\lambda}_1\hat{u}_h\hat{v}_h)(x)dx\right).
    \end{align*} Now, we observe that $\quaderrorref^1$ is a bounded bilinear form on $W^{1,\infty}(\Tref) \times W^{1,\infty}(\Tref)$, since for any $f,g\in W^{1,\infty}(\Tref)$ $$\left|\hat{\mathcal{E}^1}(f,g)\right| \leq C\norm{f}_{1,\infty,\Tref}\norm{g}_{1,\infty,\Tref}.$$ Moreover, $\quaderrorref^1(f,g) = 0$ whenever either $f$ or $g$ is constant. It follows by the Bramble-Hilbert lemma for bilinear forms (Lemma \ref{lemma:bramble_hilbert_bilinear}) that there exists a constant $C = C(\Tref)$ such that \begin{equation}\label{eq:init_bound_e1_L2}
    \left|\quaderrorref^1(\hat{\lambda}_2,\hat{\lambda}_1\hat{u}_h\hat{v}_h)\right| \leq C|\hat{\lambda}_2|_{1,\infty,\Tref}|\hat{\lambda}_1\hat{u}_h\hat{v}_h|_{1,\infty,\Tref}.\end{equation} Now, applying the product rule and the equivalence of norms on a finite-dimensional space, we have that $$|\hat{\lambda}_1\hat{u}_h\hat{v}_h|_{1,\infty,\Tref} \leq C(|\hat{u}_h\hat{v}_h|_{0,1,\Tref}|\hat{\lambda}_1|_{1,\infty,\Tref} + |\hat{u}_h\hat{v}_h|_{1,1,\Tref}|\hat{\lambda}_1|_{0,\infty,\Tref}).$$ Substituting this bound into the estimate \eqref{eq:init_bound_e1_L2} and scaling between $T$ and the reference element, we have \begin{align*}
|\quaderrorlocal^1(\lambda_2, \lambda_1 u_hv_h)| &\leq C|T||\quaderrorref^1(\hat{\lambda}_2,\hat{\lambda}_1 \hat{u}_h\hat{v}_h)| \\&\leq C|T||\hat{\lambda_2}|_{1,\infty,\Tref}(|\hat{u}_h\hat{v}_h|_{0,1,\Tref}|\hat{\lambda}_1|_{1,\infty,\Tref} + |\hat{u}_h\hat{v}_h|_{1,1,\Tref}|\hat{\lambda}_1|_{0,\infty,\Tref}) \\&\leq Ch^2|\lambda_2|_{1,\infty,T}\norm{\lambda_1}_{1,\infty,T} \norm{u_hv_h}_{1,1,T}.
\end{align*}

\item[2.] The quadrature used on this step is a straightforward application of the mass-lump quadrature rule $\mathcal{M}_{h,T}$ to to evaluate $\int_T (\lambda _1 u_hv_h)(x) dx$. It follows from standard estimates (e.g., \cite{nie_lumped_1985}), following a similar argument to the one from the previous lemma, that 
\begin{align*}
|\quaderror^2(\lambda_1u_hv_h; \lambda_2)| &\leq Ch^2\norm{\lambda_2}_{0,\infty,T}\norm{\lambda_1}_{2,\infty,T}[\norm{u_hv_h}_{1,1,T} + |\diag\nabla^2(u_hv_h)|_{0,1,T}] 
\\&\leq Ch^2\norm{\lambda_2}_{0,\infty,T}\norm{\lambda_1}_{2,\infty,T}\norm{u_h}_{1,T}\norm{v_h}_{1,T},    
\end{align*} where $\diag \nabla^2 (u_hv_h)$ indicates the diagonal part of the matrix of second derivatives of $u_hv_h$. The second inequality uses the fact that $\diag \nabla^2 u_h = \diag \nabla^2 v_h = 0$, which is only true if $u_h$ and $v_h$ are multilinear.

\item[3.] To estimate \eqref{eq:smootherr3}, we again map to the reference element:
\begin{align*}
        \quaderrorref^3(\hat{\lambda}_2,\hat{\lambda}_1\hat{u}_h\hat{v}_h) &= \left(\int_{\Tref} \hat{\lambda}_2(x)dx\right)\left(\frac{1}{N_T}\sum_{x_i\in T} \lambda_1(x_i)u_{i}v_{i}\right)  \\&\qquad\qquad -   \sum_{x_i\in T} \lambda_1(x_i)u_{i}v_{i}\int_{\widehat{V_{i}\cap T}} \hat{\lambda}_2(x)dx,
\end{align*}
where we note that $|\widehat{V_i\cap T}| = \frac{\hat{T}}{N_T}$ by multilinearity of the volume transformation. 

Again, we have for any $f,g\in W^{1,\infty}(\Tref)$, that 
\begin{align*}
    |\quaderrorref^3(f, g)| \leq C\norm{f}_{1,\infty,\Tref}\norm{g}_{1,\infty,\Tref}.
\end{align*}
Moreover, $\quaderrorref^3(f, g)$ vanishes if $f$ or $g$ is constant, since if $g \equiv \bar{g}$ we have \begin{align*}\left(\int_{\Tref} f(x)dx\right)\left(\frac{1}{N_T}\sum_{x_i\in T} g_{i}\right) &= \bar{g}\int_{\Tref} f(x)dx \\&= \sum_{x_i\in T} \bar{g} \int_{\widehat{T \cap V_i}} f(x)dx = \sum_{x_i\in T} g_i \int_{\widehat{T \cap V_i}} f(x)dx,\end{align*} and on the other hand if $f \equiv \bar{f}$,
\begin{align*}\left(\int_{\Tref} f(x)dx\right)\left(\frac{1}{N_T}\sum_{x_i\in T} g_{i}\right) &= \sum_{x_i\in T} g_{i}\left(\bar{f} \frac{|\Tref|}{N_T}\right) =  \sum_{x_i\in T} g_{i} \left(\int_{\widehat{T \cap V_i}}f(x)dx\right).\end{align*} The last equality above follows from the fact that  $|\widehat{T \cap V_i}| = \frac{|\Tref|}{N_T}.$ Thus, following the same steps as in the estimate of \eqref{eq:smootherr1}, we obtain
\begin{align*}
|\quaderrorlocal(\lambda_2, \lambda_1 u_hv_h)| \leq Ch^2|\lambda_2|_{1,\infty,T}\norm{\lambda_1}_{1,\infty,T} \norm{u_h}_{1,T}\norm{v_h}_{1,T}.
\end{align*}
\end{enumerate}

Finally, to finish the proof we sum the error estimates to prove \eqref{eq:L2localconsistencysmooth}.
\end{proof}
\end{lemma}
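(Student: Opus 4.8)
The plan is to leverage the hypothesis $\interior T\cap\partial K=\emptyset$: on such an element $1_K$ is constant, so either $K\cap T=\emptyset$, whence $\quaderrorlocal(\kappa u_hv_h)=0$ and there is nothing to prove, or $T\subseteq K$, in which case $\quaderrorlocal(\kappa u_hv_h)=\int_T\lambda_1\lambda_2u_hv_h-\sum_{x_i\in T}\lambda_1(x_i)u_iv_i\int_{V_i\cap T}\lambda_2$ with a smooth weight $\lambda=\lambda_1\lambda_2\in W^{1,\infty}$. Lemma \ref{lemma:H1consistency} already gives $O(h)$ by killing the zeroth-order part, so the whole task is to decompose $\quaderrorlocal$ into a sum of \emph{bilinear} error forms, each of which vanishes whenever either of its two arguments is constant; the bilinear Bramble--Hilbert lemma (Lemma \ref{lemma:bramble_hilbert_bilinear}), applied on the reference element $\Tref$ and undone via the scaling \eqref{eq:estimate_seminorm}, then extracts one power of $h$ from each slot, for $O(h^2)$ in total.

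Concretely, I would use the cell average $\bar\lambda_2:=\frac1{|T|}\int_T\lambda_2$ as a pivot and write $\quaderrorlocal(\kappa u_hv_h)=\quaderrorlocal^1+\quaderrorlocal^2+\quaderrorlocal^3$, where $\quaderrorlocal^1=\int_T\lambda_2\,(\lambda_1u_hv_h)-\bar\lambda_2\int_T\lambda_1u_hv_h$ is bilinear in $\lambda_2$ and $\lambda_1u_hv_h$ and vanishes if either factor is constant; $\quaderrorlocal^2=\bar\lambda_2\big[\int_T\lambda_1u_hv_h-\frac{|T|}{N_T}\sum_{x_i\in T}\lambda_1(x_i)u_iv_i\big]$ is $\bar\lambda_2$ times the mass-lump error for the smooth-times-multilinear integrand $\lambda_1u_hv_h$; and $\quaderrorlocal^3=\sum_{x_i\in T}\lambda_1(x_i)u_iv_i\big[\bar\lambda_2\,|V_i\cap T|-\int_{V_i\cap T}\lambda_2\big]$ is the finite-volume averaging defect, again bilinear in $\lambda_2$ and the nodal data of $\lambda_1u_hv_h$ and annihilating constants in either slot thanks to the voxel identity $|V_i\cap T|=|T|/N_T$ from \eqref{eq:voxel-identities}. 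For $\quaderrorlocal^1$ and $\quaderrorlocal^3$ I would map to $\Tref$, record the crude bound $|\quaderrorref^{\,\bullet}(f,g)|\le C\norm{f}_{1,\infty,\Tref}\norm{g}_{1,\infty,\Tref}$, invoke Lemma \ref{lemma:bramble_hilbert_bilinear}, then use the product rule together with equivalence of norms on a finite-dimensional space to bound $|\hat\lambda_1\hat u_h\hat v_h|_{1,\infty,\Tref}$ by $C(|\hat\lambda_1|_{1,\infty,\Tref}|\hat u_h\hat v_h|_{0,1,\Tref}+|\hat\lambda_1|_{0,\infty,\Tref}|\hat u_h\hat v_h|_{1,1,\Tref})$, and finally scale back to $T$. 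For $\quaderrorlocal^2$ I would quote the standard lumped-mass estimate (e.g.\ \cite{nie_lumped_1985}); the point to carry through is that the lumped-mass error, being a tensor product of one-dimensional trapezoidal-type rules, only sees the \emph{diagonal} of $\nabla^2(\lambda_1u_hv_h)$, and because $u_h,v_h$ are multilinear one has $\partial_{kk}(u_hv_h)=2\,\partial_ku_h\,\partial_kv_h$, so no genuine second derivatives of $u_h$ or $v_h$ survive and the bound closes with $\norm{u_h}_{1,T}\norm{v_h}_{1,T}$. Summing the three local bounds yields \eqref{eq:L2localconsistencysmooth}; no summation over $\mathcal{T}_h$ is required since the statement is local.

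The main obstacle is the bookkeeping of the decomposition rather than any individual hard estimate: one has to choose the telescoping so that all three pieces provably vanish on constants in the correct slot, the delicate case being $\quaderrorlocal^3$, which collapses to $0$ for constant $\lambda_2$ only because the averaged weight $\int_{V_i\cap T}\lambda_2$ then reduces to the mass-lump weight $\bar\lambda_2\,|T|/N_T$, which in turn rests on \eqref{eq:voxel-identities}. The second point needing care is regularity accounting: $\lambda_1$ enters $\quaderrorlocal^2$ multiplied into $u_hv_h$ \emph{before} lumping, so its second derivatives are genuinely needed, which is precisely why $\norm{\lambda_1}_{2,\infty,T}$ (but only $\norm{\lambda_2}_{1,\infty,T}$) appears in the statement, and it is the multilinearity of $V_h$, not merely piecewise linearity, that keeps the $u_h,v_h$ dependence at the $H^1$ level.
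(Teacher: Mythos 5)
Your proposal is correct and follows essentially the same route as the paper: the three-term telescoping through the cell average of $\lambda_2$ is exactly the paper's decomposition \eqref{eq:smootherr1}--\eqref{eq:smootherr3}, handled the same way (bilinear Bramble--Hilbert on the reference element for the first and third pieces, the standard lumped-mass estimate plus multilinearity killing $\diag\nabla^2(u_hv_h)$ for the second, and the voxel identity \eqref{eq:voxel-identities} to make the third piece vanish on constants). No gaps.
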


\begin{remark}\label{remark:related_work}
    Proofs of various local consistency error estimates (in the $\lambda_1 \equiv 1$ case) related to Lemma \ref{lemma:L2consistency} have appeared in the finite-volume-element literature. We first mention \cite{hackbusch_first_1989}, who uses a dual mesh condition (Equation (2.4.1f) in \cite{hackbusch_first_1989}) similar to \ref{eq:voxel-identities} to estimate the difference between a finite-volume-element discretization of \eqref{eq:reaction-diffusion-parabolic-strong} and the corresponding $P^1$-finite element discretization in 2D. The proofs of the relevant estimates (Lemmas and Lemma 3.3.3) are valid only in two dimensions. A new proof appears in \cite{liang_symmetric_2002} (and similarly \cite{wu_error_2003}), and although the authors assume that $\Omega\subseteq \RR^2$ their proof apparently can generalize to multiple dimensions as well and is similar in spirit to our proof of Lemma \ref{lemma:L2consistency} (we mention also the related work \cite{ma_symmetric_2003}, which studies the parabolic reaction-diffusion equation). Finally, consistency estimates for the finite-volume-element discretization of the diffusion equation were explicitly generalized to multiple dimensions in \cite{xu_analysis_2009} (see also \cite{ewing_accuracy_2002}). The argument in \cite{ma_symmetric_2003} and \cite{xu_analysis_2009} goes as follows: if $f\in W^{1,p}(\Omega)$ and $v_h\in V_h$, then \begin{align*}
 \left|\int_{T} f(x) v_h(x) dx - \sum_{x_i\in T} v_h(x_i) \int_{V_i\cap T} f(x)dx  \right| &=  \left|\int_{T} [f(x) - \bar{f}_T] v_h(x) dx - \sum_{x_i\in T} v_h(x_i) \int_{V_i\cap T} [f(x) - \bar{f}_T]dx  \right|
 \\&\leq \left|f - \bar{f}_T\right|_{p,T}\left|v_h - \sum_{x_i\in T} v_h(x_i)1_{x\in V_i}\right|_{q,T} \leq Ch^2 \norm{f}_{1,p,T} \norm{v_h}_{1,q,T},       
    \end{align*} where $\frac{1}{p} + \frac{1}{q} = 1$ and $\bar{f}_T$ is the average value of $f$ on $T$. Lemma \ref{lemma:L2consistency} admits a similar proof, after estimating the interpolation error for $u_hv_h\lambda_2(x)$ onto $V_h$. For details we refer the reader to \cite{ma_symmetric_2003,xu_analysis_2009}. 
\end{remark}

\begin{remark}
In addition to its relationship with finite volume methods, the general idea of interpolating nonsmooth functions by averaging is related to the Cl\'ement and Scott-Zhang finite element interpolation operators \cite{clement_approximation_1975,scott_finite_1990}, which compute the projection $f_h$ of a given function $f$ onto a finite element space $V_h$ by first computing local $L^2$-projections $f_{h,i}$ of $f$ onto $V_h$ on a neighborhood $S_i$ of each node $x_i$, and then set $f_h(x_i) = f_{h,i}(x_i)$. We see that one can think of \eqref{eq:quadrature_scheme} as just one example from a family of local averaging methods for approximating integrals of nonsmooth functions.

To take this idea further, we note that a common choice for $S_i$ above is the nodal support of $x_i$, \begin{equation}\label{eq:nodal-support}
S_i =  \bigcup_{T\ni x_i} T.    
\end{equation} If we choose $V_i = S_i$ from \eqref{eq:nodal-support}, then the voxels now overlap (and \eqref{eq:voxel-identities} is not satisfied), but we can define a consistent quadrature scheme \begin{equation}\label{eq:support-averaging}
\int_\Omega u_h(x)v_h(x)\lambda_1(x)\lambda_2(x)1_K(x)dx \approx \sum_{i=1}^N \frac{u_iv_i}{N_T} \lambda_1(x_i)\int_{V_i} \lambda_2(x)1_K(x)dx,    
\end{equation} where the factor of $N_T$ reflects the fact that $$\sum_{T\ni x_i} \int_{V_i \cap T} \lambda_2(x)1_K(x)dx = N_T\int_T \lambda_2(x)1_K(x)dx.$$ With trivial modifications to our paper (especially Lemma \ref{lemma:L2consistency}) one could replace \eqref{eq:quadrature_scheme} with \eqref{eq:support-averaging} and arrive at the same convergence results. Although we only directly consider the barycentric dual here, it may prove useful in the future to consider other averaging schemes such as the one suggested above. Indeed, \eqref{eq:support-averaging} may be attractive if only because the nodal support is typically easily accessible in an unstructured mesh finite element implementation, and so the barrier to implementation is lower than for \eqref{eq:quadrature_scheme}.
\end{remark}

\begin{remark}\label{remark:suboptimalerrorestimates}
The error bounds of Lemma \ref{lemma:H1consistency} and \ref{lemma:L2consistency} involve $L^\infty(\Omega)$ and $W^{1,\infty}(\Omega)$ norms of $\lambda_2$, respectively, compared with $W^{1,\infty}(\Omega)$ and $W^{2,\infty}(\Omega)$ norms of $\lambda_2$. Therefore, we would expect that, all else being equal, choosing $\lambda_1 \equiv 1$ in \eqref{eq:quadrature_scheme} (which we refer to later as the averaging method) should result in smaller quadrature errors compared with $\lambda_2\equiv 1$ (referred to as the lumping method), especially as the regularity of $\lambda$ deteriorates. We briefly investigate this idea numerically in Section \ref{section:numerics}, but do not observe significant differences in the errors of the two methods. While we do not claim the error bounds computed here are optimal, it does seem reasonable to expect that the averaging approximation of $\lambda_1$ would require less regularity than the pointwise evaluation required by the lumping method, particularly in higher dimensions. We leave a more thorough investigation of this aspect of the method to future work.
\end{remark}

\section{Refined error bounds on the interface}\label{section:refinedconvergenceresults}

Let \begin{align*}
    \mathcal{X}_{K,h} &= \{T \in \mathcal{T}_h : \interior T \cap K \neq \emptyset\},\\
    K_h &= \bigcup_{T \in \mathcal{X}_{K,h}} T.
\end{align*}

Then combining the local error estimates from Lemma \ref{lemma:H1consistency} and \ref{lemma:L2consistency}, we obtain the following global error estimate:

\begin{corollary}\label{corollary:initial_global_error_estimate}
    For any $u_h,v_h\in V_h$, 
    \begin{equation}\label{eq:initialglobalerrestimate}
    \begin{aligned}
        |\quaderror(u_hv_h)| &\leq Ch^2\sum_{T\in \mathcal{T}_h\setminus\mathcal{X}_{K,h} } Ch^2\norm{u_h}_{2,T}\norm{v_h}_{2,T} + Ch\sum_{T\in \mathcal{X}_{K,h} } Ch|u_hv_h|_{1,1,T}\\
        &= Ch^2\sum_{T\in \mathcal{T}_h\setminus\mathcal{X}_{K,h} } \norm{u_h}_{2,T}\norm{v_h}_{2,T} + Ch|u_hv_h|_{1,1,K_h}.
    \end{aligned}
    \end{equation}
\end{corollary}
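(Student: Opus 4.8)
The plan is to derive this directly by summing the two local estimates already established. First I would decompose the global error into element contributions: since $\int_\Omega\kappa u_hv_h\,dx=\sum_{T\in\mathcal{T}_h}\int_T\kappa u_hv_h\,dx$ and, by Remark \ref{remark:mass-lumping-to-voxel-sum}, $\mathcal{Q}(u_h,v_h)=\sum_{T\in\mathcal{T}_h}\sum_{x_i\in T}u_iv_i\lambda_1(x_i)\int_{V_i\cap T}\lambda_2 1_K\,dx$, we have $\quaderror(u_hv_h)=\sum_{T\in\mathcal{T}_h}\quaderrorlocal(u_hv_h)$. Writing $\mathcal{T}_h=(\mathcal{T}_h\setminus\mathcal{X}_{K,h})\cup\mathcal{X}_{K,h}$ and applying the triangle inequality then reduces the problem to bounding $\sum_{T\notin\mathcal{X}_{K,h}}|\quaderrorlocal(u_hv_h)|$ and $\sum_{T\in\mathcal{X}_{K,h}}|\quaderrorlocal(u_hv_h)|$ separately.

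On $\mathcal{T}_h\setminus\mathcal{X}_{K,h}$ I would apply Lemma \ref{lemma:L2consistency}. The one point to check is that its hypothesis is met: $T\notin\mathcal{X}_{K,h}$ means $\interior T\cap K=\emptyset$, and since $\interior T$ is open this forces $\interior T\cap\partial K=\emptyset$ (a point of $\partial K$ inside $\interior T$ would have a whole neighborhood in $\interior T$ meeting $K$). Lemma \ref{lemma:L2consistency} then gives $|\quaderrorlocal(\kappa u_hv_h)|\le Ch^2\norm{\lambda_2}_{1,\infty,T}\norm{\lambda_1}_{2,\infty,T}\norm{u_h}_{1,T}\norm{v_h}_{1,T}$; absorbing the $\lambda$-norms into $C$ and using $\norm{u_h}_{1,T}\le\norm{u_h}_{2,T}$ yields $Ch^2\norm{u_h}_{2,T}\norm{v_h}_{2,T}$ on each such element, and summing produces the first term. (In fact $1_K=0$ a.e.\ on such $T$, so the local error vanishes there, but the weaker inequality is all that is needed and is the one stated.) On $\mathcal{X}_{K,h}$ I would use only the crude $O(h)$ estimate of Lemma \ref{lemma:H1consistency}(i), again absorbing the $\lambda$-norms, to get $|\quaderrorlocal(\kappa u_hv_h)|\le Ch\norm{u_hv_h}_{1,1,T}$. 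Summing over $T\in\mathcal{X}_{K,h}$ and using that these elements tile $K_h$ with pairwise intersections of measure zero, so that the $W^{1,1}$ (semi)norm is additive, gives the interface term $Ch|u_hv_h|_{1,1,K_h}$. Adding the two bounds yields the claim.

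I do not expect any genuine obstacle here: the statement is a bookkeeping combination of Lemmas \ref{lemma:H1consistency} and \ref{lemma:L2consistency}, and it is deliberately coarse on $\mathcal{X}_{K,h}$, using only the $O(h)$ rate there rather than distinguishing the elements with $T\subseteq K$. The only two small points that need a line of justification are the elementary implication $\interior T\cap K=\emptyset\Rightarrow\interior T\cap\partial K=\emptyset$, which licenses the use of Lemma \ref{lemma:L2consistency} off the interface, and the additivity of $|\cdot|_{1,1}$ over the element partition of $K_h$ (immediate, since it is built from $L^1$ norms of derivatives). The value of the estimate is that it isolates the low-order contribution onto $K_h$, whose smallness is exploited in the remainder of Section \ref{section:refinedconvergenceresults} to recover the optimal global convergence rate.
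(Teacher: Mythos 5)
Your proof is correct and is exactly the argument the paper intends: the corollary is stated there without proof as the element-wise combination of Lemma \ref{lemma:H1consistency}(i) on $\mathcal{X}_{K,h}$ and Lemma \ref{lemma:L2consistency} on its complement, which is precisely your decomposition, and your verification that $\interior T\cap K=\emptyset$ implies $\interior T\cap\partial K=\emptyset$ correctly licenses the use of Lemma \ref{lemma:L2consistency} off the interface. The only (inherited) imprecision is that Lemma \ref{lemma:H1consistency}(i) actually yields the full norm $\norm{u_hv_h}_{1,1,T}$ rather than the seminorm $|u_hv_h|_{1,1,T}$ on the interface elements, but the paper makes the same silent replacement in \eqref{eq:H1globalconsistencysmooth} and it is harmless for every subsequent use of the estimate.
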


Our object in this section will be to improve upon \eqref{eq:initialglobalerrestimate} by studying the term $|u_hv_h|_{1,1,K_h}$. The $O(h)$ estimate of Lemma \ref{lemma:H1consistency} for the quadrature errors is the best possible on each element $T\in\mathcal{X}_h$, since without a mesh restriction the intersection of $K$ with such elements can be arbitrary. On the other hand, if we assume that $|K_h| = O(h)$, then \begin{equation}\label{eq:Linferrorestimate}
    |u_hv_h|_{1,1,K_h} \leq C|K_h|  \norm{u_h}_{1,\infty}\norm{v_h}_{1,\infty} \leq Ch \norm{u_h}_{1,\infty}\norm{v_h}_{1,\infty}.
\end{equation} Heuristically, the idea is that the larger $O(h)$ errors local to the discontinuity contribute relatively little to the global error. However, \eqref{eq:Linferrorestimate} is not very useful for finite element error analysis, which normally requires an integral norm on $v_h$. The following related error estimate will be used to prove a supercloseness result for reaction-drift-diffusion equations:

\begin{theorem}\label{thm:superconvergence_lemma}
    Assume that $|K_h| = O(h)$. Then, for any $2\leq p \leq \infty$ there exists a constant $C$ such that $$|\mathcal{E}(u_hv_h)| \leq Ch^{\frac{3p - 2}{2p}} \norm{u_h}_{1,p}\norm{v_h}_1.$$
\end{theorem}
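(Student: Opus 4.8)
The plan is to bootstrap from the crude global split of Corollary~\ref{corollary:initial_global_error_estimate}. Combining the local estimates of Lemmas~\ref{lemma:H1consistency}(i) and~\ref{lemma:L2consistency}, summing, and using Cauchy--Schwarz over the elements not meeting $\partial K$ (as in that corollary), one gets, after absorbing the fixed norms of $\lambda_1,\lambda_2$ into the constant,
\[
|\mathcal{E}(u_hv_h)| \;\le\; Ch^2\,\norm{u_h}_1\norm{v_h}_1 \;+\; Ch\,|u_hv_h|_{1,1,K_h}.
\]
The first term is already harmless: on the bounded domain $\Omega$ one has $\norm{u_h}_1 \le C\norm{u_h}_{1,p}$ for every $p\ge 2$, and $h^2\le h^{(3p-2)/(2p)}$, so it is dominated by the asserted bound. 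Everything therefore reduces to proving
\[
|u_hv_h|_{1,1,K_h} \;\le\; Ch^{\frac12-\frac1p}\norm{u_h}_{1,p}\norm{v_h}_1 ,
\]
that is, to squeezing the gain $h^{1/2-1/p}=h^{(p-2)/(2p)}$ over the trivial $O(1)$ bound out of the hypothesis $|K_h|=O(h)$.

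For this I would use the product rule to bound $|u_hv_h|_{1,1,K_h}$ by $\int_{K_h}|u_h|\,|\nabla v_h|\,dx + \int_{K_h}|v_h|\,|\nabla u_h|\,dx$, and estimate the two integrals \emph{asymmetrically}, matching the asymmetric target $\norm{u_h}_{1,p}\norm{v_h}_1$. On the first integral I apply H\"older with exponents $(2,2)$: the gradient factor is $|v_h|_{1,2,K_h}\le\norm{v_h}_1$, while the zeroth-order factor is controlled by a second use of H\"older together with $|K_h|=O(h)$,
\[
\norm{u_h}_{0,2,K_h}\;\le\;|K_h|^{\frac12-\frac1p}\norm{u_h}_{0,p,\Omega}\;\le\;Ch^{\frac12-\frac1p}\norm{u_h}_{1,p}.
\]
On the second integral I apply H\"older with conjugate exponents $(p',p)$, $p'=p/(p-1)$: the gradient factor is $|u_h|_{1,p,K_h}\le\norm{u_h}_{1,p}$, and since $p'\le2$,
\[
\norm{v_h}_{0,p',K_h}\;\le\;|K_h|^{\frac1{p'}-\frac12}\norm{v_h}_{0,2,\Omega}\;=\;Ch^{\frac12-\frac1p}\norm{v_h}_1,
\]
where $\tfrac1{p'}-\tfrac12=\tfrac12-\tfrac1p$ was used. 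Adding the two bounds and multiplying by the prefactor $Ch$ gives $Ch^{1+\frac12-\frac1p}\norm{u_h}_{1,p}\norm{v_h}_1=Ch^{(3p-2)/(2p)}\norm{u_h}_{1,p}\norm{v_h}_1$; the endpoint $p=\infty$, where $p'=1$, is handled in the same way.

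I do not expect a genuine obstacle here: this is, by design, the ``straightforward but coarse'' estimate advertised in the introduction. The one point that needs care is choosing the two pairs of H\"older exponents consistently --- the two gradient factors must land in $L^2$ and $L^p$ (exactly the norms available on $v_h$ and $u_h$), while the two zeroth-order factors must absorb \emph{all} of the volume gain $|K_h|^{1/2-1/p}$, and it is the demand that both halves contribute the same power of $h$ that forces the exponent $(3p-2)/(2p)$. No trace or interpolation inequalities are needed at this level of precision; those enter only in the refined estimate of Corollary~\ref{corollary:final_global_error_estimate}.
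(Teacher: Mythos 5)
Your proof is correct and is exactly the ``straightforward but coarse'' H\"older argument the paper intends for this theorem (which it states without a written proof, describing the method only in the introduction): split off the $O(h^2)$ bulk term via Corollary \ref{corollary:initial_global_error_estimate}, then absorb the volume factor $|K_h|^{1/2-1/p}$ asymmetrically so that the gradient factors land in $L^2(K_h)$ and $L^p(K_h)$ respectively. The exponent bookkeeping, the reduction of the bulk term using $\norm{u_h}_1\le C\norm{u_h}_{1,p}$ and $h^2\le h^{(3p-2)/(2p)}$, and the $p=\infty$ endpoint are all handled correctly.
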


Next, we turn our attention to $O(h^2)$ error estimates involving integral norms of $v_h$.  In particular, the remainder of this section will be devoted to replacing the $W^{1,\infty}(\Omega)$ norms in \eqref{eq:Linferrorestimate} with $H^2(\Omega)$ norms. The analysis is based on the following mild assumption about the smoothness of $K$ (which, incidentally, also implies $|K_h| = O(h)$): 

\begin{assumption}\label{assumption:lipschitz_boundary_assumption}
    There is a Lipschitz domain $\hat{K}\subseteq \RR^d$ such that $(\partial K \cap \Omega) \subseteq \partial \hat{K}$.  By Lipschitz domain, we mean that $\hat{K}$ is open and that for each $x \in \partial \hat{K}$, there exists an open hypercube $V\subseteq  \RR^{d-1}$, a Lipschitz function $\varphi : V\to \RR$, two constants $\alpha,\beta\in\mathbb{R}$ such that $\alpha < \varphi(y) < \beta$ for all $y\in V$, and a linear rotation map $Q:\RR^{d}\to\RR^d $ such that \begin{enumerate}
        \item $Qx\in V\times [\alpha,\beta]$ and
        \item $Q\hat{K} \cap [V\times (\alpha,\beta)] = \{ (y,w) : y\in\RR^{d-1}, \alpha < w < \varphi(y)\}$.
    \end{enumerate}
    In other words, there is a rotation of $\RR^d$ described by $Q$ such that $\partial \hat{K}$ can be locally written as the graph of a Lipschitz function $\varphi$ on some open cube $V$.
\end{assumption}

\begin{remark}
 Assumption \ref{assumption:lipschitz_boundary_assumption} essentially requires that the portion of $\partial K$ interior to $\Omega$ is a Lipschitz curve. We chose this slightly more general statement in lieu of the more straightforward assumption that $K$ itself is Lipschitz in part to handle the case where $K$ is \textit{defined} as the intersection of a larger set $\hat{K}$ with $\Omega$, where $\hat{K}$ has a Lipschitz boundary. The distinction is necessary, since even if $\Omega$ and $\hat{K}$ each have smooth boundaries, $K = \hat{K} \cap \Omega$ can still have cusps. The more general assumption may also be useful in the case where $\partial K$ is defined by a Lipschitz level set function that can intersect $\partial \Omega$, but information about that intersection is not known a priori. 
 
 Of course, if $K$ itself is known to be Lipschitz, the natural choice is to take $\hat{K} = K$ (in particular, if $K$ is compactly contained in $\Omega$). We note, however, that even if $K$ is Lipschitz, if it shares part its boundary with $\Omega$ then the constants in the error analysis may improve by choosing $\hat{K}$ to be different from $K$ (the extreme example being $K = \Omega$, $\hat{K} = \emptyset$).
    
\end{remark}

 We start our analysis by defining the distance function $d_D: \RR^d\to\RR$ for any set $D\subseteq \RR^d$ by $$d_D(x) = d(x, D) = \inf_{y\in D} |x - y|.$$  Our object will be to obtain uniform (in $h$) control over the traces of $H^1(\Omega)$ functions on the $d_K$ and $d_{K^c}$-level sets (here $K^c$ is the complement of $K$ taken relative to $\Omega$) $$\Gamma_\varepsilon = \{x \in K : d_{K^c}(x) = \varepsilon\}\qquad \textrm{and}\qquad\tilde{\Gamma}_\varepsilon = \{x \in K^c: d_K(x) = \varepsilon\}.$$ In Subsection \ref{section:uniformtraceinequality} below, we use the differentiability properties of $d_{\hat{K}}$ and $d_{\hat{K}^c}$ to characterize the normal vectors of their level sets, which contain $\Gamma_h$ and $\tilde{\Gamma}_h$.
 
 \subsection{Uniform trace inequalities}\label{section:uniformtraceinequality}
 
In this subsection, we will establish several facts about general Lipschitz domains $D$, that we later apply to $\hat{K}$. First, the normal vectors of $D$ can be directly related to the constant appearing in the trace inequality for $D$ via the following elementary lemma, appearing in \cite{grisvard_elliptic_2011}:
 
\begin{lemma}[Characterization of trace constant]\label{lemma:characterizationoftraceconstant}
    Let $D$ be a Lipschitz domain. Then 
      \begin{enumerate}
        \item There exists a vector field $\bmu_D\in W^{1,\infty}(\RR^d; \RR^d)$ and constant $\delta_D > 0$ such that \begin{equation}\label{eq:traceineqnormal}
        \bmu_D(x) \cdot \bn_D(x) > \delta_D \quad \textrm{a.e. $x\in\partial D$},     
        \end{equation} 
        where $\bn_D$ is the outward pointing normal on $\partial D$.
        \item The constant $C_D$ in the trace inequality for $D$ is bounded by $(\delta_D)^{-1}\norm{\bmu_D}_{1,\infty,D}$ for any vector field $\bmu_D \in W^{1,\infty}(\RR^d; \RR^d)$ satisfying \eqref{eq:traceineqnormal}. That is, there is a constant $C(p)>0$ independent of $D$ such that for any $f\in W^{1,p}(D)$, we have $$\delta_D \norm{f}_{0,p,\partial D} \leq C(p)\norm{\bmu_D}_{1,\infty,D} \norm{f}_{1,p,D}.$$
    \end{enumerate}

\begin{proof}
    This is essentially Lemmas 1.5.1.9 and 1.5.1.10 from \cite{grisvard_elliptic_2011}, except that we replace their $C^\infty(D; \RR^d)$ vector field with a $W^{1,\infty}(\RR^d; \RR^d)$ extension.
\end{proof}
\end{lemma}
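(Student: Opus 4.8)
The plan is to reproduce, with the minor change indicated in the statement, the two-part argument of Grisvard \cite{grisvard_elliptic_2011}: first construct a transversal vector field $\bmu_D$ from the local Lipschitz charts of $D$, and then turn the transversality condition $\bmu_D\cdot\bn_D>\delta_D$ into the trace estimate by means of the Gauss--Green formula. For assertion (1) I would build $\bmu_D$ chart by chart. For each $x\in\partial D$ the Lipschitz-domain hypothesis provides, after a rotation $Q_x$, a neighborhood $W_x$ in which $\partial D$ is the graph $\{w=\varphi_x(y)\}$ of a Lipschitz function and $D$ lies on one side of it; by Rademacher's theorem $\varphi_x$ is differentiable a.e., and wherever $\nabla\varphi_x$ exists the (a.e.-defined) outward unit normal of $D$ is $\bn_D=\pm Q_x^{\top}(-\nabla\varphi_x,1)/\sqrt{1+|\nabla\varphi_x|^2}$, the sign being fixed by the side on which $D$ sits. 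Consequently the \emph{constant} unit field $\bmu_x:=\pm Q_x^{\top}e_d$ satisfies
$$\bmu_x\cdot\bn_D=\frac{1}{\sqrt{1+|\nabla\varphi_x|^2}}\ \geq\ \frac{1}{\sqrt{1+\mathrm{Lip}(\varphi_x)^2}}\ >\ 0\qquad\text{a.e.\ on }W_x\cap\partial D.$$
Since $\partial D$ is compact, I would extract a finite subcover $W_1,\dots,W_M$, pick a smooth partition of unity $\{\psi_k\}$ subordinate to it with $\sum_k\psi_k\equiv 1$ on a neighborhood of $\partial D$ and a cutoff $\chi\in C_c^\infty$ equal to $1$ near $\partial D$, and set $\bmu_D:=\chi\sum_k\psi_k\bmu_k$. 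Then $\bmu_D\in W^{1,\infty}(\RR^d;\RR^d)$ (indeed $C^\infty$), and since the $\psi_k$ are nonnegative with $\sum_k\psi_k\equiv1$ on $\partial D$, convex combination preserves the lower bound: $\bmu_D\cdot\bn_D=\sum_k\psi_k(\bmu_k\cdot\bn_D)\geq\delta_D:=\min_k(1+\mathrm{Lip}(\varphi_k)^2)^{-1/2}>0$ a.e.\ on $\partial D$. (If $\partial D$ is not compact one instead invokes the uniform Lipschitz bound implicit in the chart definition, or, in the application to $\hat K$, replaces $\hat K$ by its intersection with a large ball containing a neighborhood of $\Omega$.)

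For assertion (2), by density it suffices to treat $f\in C^\infty(\overline D)$. Because $|f|^p\geq0$, $\bmu_D\cdot\bn_D/\delta_D>1$ on $\partial D$, and $|f|^p\bmu_D\in W^{1,1}(D)$, the Gauss--Green formula on the Lipschitz domain $D$ yields
$$\delta_D\int_{\partial D}|f|^p\,dS\ \leq\ \int_{\partial D}|f|^p\,(\bmu_D\cdot\bn_D)\,dS\ =\ \int_D|f|^p\,\nabla\!\cdot\!\bmu_D\,dx+\int_D\bmu_D\cdot\nabla(|f|^p)\,dx.$$
I would bound the first integral by $\norm{\nabla\!\cdot\!\bmu_D}_{0,\infty,D}\,\norm{f}_{0,p,D}^p$ and, using $|\nabla(|f|^p)|\leq p|f|^{p-1}|\nabla f|$ and H\"older's inequality, the second by $p\,\norm{\bmu_D}_{0,\infty,D}\,\norm{f}_{0,p,D}^{p-1}\norm{\nabla f}_{0,p,D}$; together with $\norm{\nabla\!\cdot\!\bmu_D}_{0,\infty,D},\norm{\bmu_D}_{0,\infty,D}\leq C(d)\norm{\bmu_D}_{1,\infty,D}$ and $\norm{f}_{0,p,D}^{p},\ \norm{f}_{0,p,D}^{p-1}\norm{\nabla f}_{0,p,D}\leq\norm{f}_{1,p,D}^p$, this gives $\delta_D\norm{f}_{0,p,\partial D}^p\leq C(p,d)\,\norm{\bmu_D}_{1,\infty,D}\,\norm{f}_{1,p,D}^p$. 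Taking $p$-th roots and then restoring the linear dependence on $\delta_D^{-1}$ and $\norm{\bmu_D}_{1,\infty,D}$ via the elementary inequality $\delta_D\leq\norm{\bmu_D}_{0,\infty,D}\leq\norm{\bmu_D}_{1,\infty,D}$ (equivalently, by normalizing $\delta_D=1$ from the start, which leaves the ratio $\delta_D^{-1}\norm{\bmu_D}_{1,\infty,D}$ unchanged) produces the stated inequality, and hence $C_D\leq C(p,d)\,\delta_D^{-1}\norm{\bmu_D}_{1,\infty,D}$; the case $p=\infty$ is immediate from $\norm{f}_{0,\infty,\partial D}\leq\norm{f}_{0,\infty,D}$.

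Neither half is deep. The points I expect to require a little care are, in (1), being scrupulous about ``a.e.''\ — the outward normal is defined only almost everywhere (Rademacher), so it is exactly the nonnegativity of the partition of unity that upgrades the chartwise bounds to a genuine a.e.\ lower bound for $\bmu_D\cdot\bn_D$, and the cutoff $\chi$ is what makes $\bmu_D$ globally $W^{1,\infty}(\RR^d;\RR^d)$, which is needed because the later analysis evaluates $\bmu_{\hat K}$ on both sides of $\partial\hat K$ (on the level sets $\Gamma_\varepsilon$ and $\tilde\Gamma_\varepsilon$); and, in (2), the small bookkeeping step converting the $p$-th-power estimate that drops out of Gauss--Green and H\"older into the advertised bound that is linear in $\norm{\bmu_D}_{1,\infty,D}$.
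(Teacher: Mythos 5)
Your proposal is correct and follows essentially the same route as the paper, which simply cites Grisvard's Lemmas 1.5.1.9 and 1.5.1.10: your chart-plus-partition-of-unity construction of the transversal field $\bmu_D$ and the Gauss--Green argument applied to $|f|^p\bmu_D$ are exactly the arguments behind those two lemmas, with the $W^{1,\infty}(\RR^d;\RR^d)$ extension handled by your cutoff $\chi$ as the paper indicates. The bookkeeping step recovering linear dependence on $\delta_D^{-1}\norm{\bmu_D}_{1,\infty,D}$ via $\delta_D\leq\norm{\bmu_D}_{1,\infty,D}$ is also sound.
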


To state the next lemma, we introduce the projection $P_D:\RR^d\to \overline{D}$ as $$P_D(x) = \argmin_{y\in \overline{D}} |x - y|.$$ Note that $P_D$ is only well-defined at almost every point $x\in \overline{D}^c$, since the set $\{y \in \overline{D} : |x - y| = d_D(x)\}$ may have more than one member. At such $x$, we will show that the direction vector $x - P_D(x)$ from $x$ to $P_D(x)$ can be written as a nonnegative linear combination of normal vectors $\bn_D$ of $D$ infinitesimally close to $P_D(x)$. Crucially, this relationship holds even when $\bn_D$ is not defined at $P_D(x)$ --- although $\bn_D$ is well-defined a.e. with respect to the $\partial D$ surface measure, $\bn_D$ may be ill-defined at $P_D(x)$ on an open subset of $D^c$. If $D$ is a $C^2$ domain, then $P_D$ is well-defined for $\varepsilon$ sufficiently small and $\frac{x - P_D(x)}{|x - P_D(x)|} = \bn_D(P_D(x))$, a result that has already been proven in many different sources (see, e.g., \cite{gilbarg_elliptic_2001}, Theorem 14.16). The novelty of Lemma \ref{lemma:characterizationofminimizers} stems from the weakening of the $C^2$ hypothesis.

\begin{lemma}\label{lemma:characterizationofminimizers}
Let $D$ be a Lipschitz domain with outward pointing normal $\bn_D$. If $x\notin \overline{D}$ and the projection $P_D(x)$ of $x$ onto $D$ is well-defined, then $$\frac{x - P_D(x)}{|x - P_D(x)|} = \frac{\sum_{j=1}^m \alpha_j \lim_{i\to\infty} \bn_D(x_i^{(j)})}{\left|\sum_{j=1}^m \alpha_j \lim_{i\to\infty} \bn_D(x_i^{(j)})\right|}$$ where $\alpha_j \geq 0$ and each $x_i^{(j)}\in \partial D$ is a sequence such that $x_i^{(j)}\to P_D(x)$ and $\bn_D(x_i^{(j)})$ converges as $i\to\infty$.

\begin{proof}
    See \hyperref[section:appendix]{Appendix}.
\end{proof}
\end{lemma}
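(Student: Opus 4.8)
The plan is to work in the local Lipschitz-graph coordinates near the foot point $p := P_D(x)$ and read off the direction of $v := x - p \ne 0$ from an exterior-ball (proximal-normal) inequality combined with Clarke's generalized gradient of the graph function. By the definition of a Lipschitz domain, near $p$ there is an orthogonal map $Q$ and a Lipschitz function $\varphi : V \to \RR$ (with Lipschitz constant $L$, say) such that, writing points of $QD$ as $(y,w) \in \RR^{d-1}\times\RR$, the set $\overline{QD}$ coincides in a neighborhood $U$ of $Qp = (y_0,\varphi(y_0))$ with the hypograph $\{(y,w) : w \le \varphi(y)\}$; I will write $Qv = (v',v_d)$. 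The one geometric input is that $B(x,|v|)$ is disjoint from $\overline{D}$, since $|v| = d(x,\overline{D})$; expanding $|z-x|^2 \ge |v|^2$ shows this is equivalent to
\begin{equation*}
v\cdot(z-p) \le \tfrac12 |z-p|^2 \qquad \text{for all } z\in\overline{D},
\end{equation*}
and restricting $z$ to the graph and using the Lipschitz bound on $\varphi$ gives $v'\cdot(y-y_0) + v_d(\varphi(y)-\varphi(y_0)) \le C|y-y_0|^2$ for $y$ near $y_0$.

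From this inequality I would extract two facts. First, $v_d > 0$: taking $z = (y_0,\varphi(y_0)-s) \in \overline{D}$ gives $-v_d s \le s^2/2$, hence $v_d \ge 0$; and if $v_d = 0$ the inequality forces $v'\cdot(y-y_0) \le C|y-y_0|^2$ near $y_0$, which along the ray $y = y_0 + tv'$ forces $v' = 0$, hence $v = 0$, a contradiction. Second --- the crux --- I claim $\xi := -v'/v_d \in \partial_C\varphi(y_0)$, the Clarke generalized gradient of $\varphi$ at $y_0$. Indeed, setting $\psi(y) := \varphi(y) + v_d^{-1} v'\cdot(y-y_0)$, the inequality says precisely that $y_0$ is a local maximizer of the locally Lipschitz function $y \mapsto \psi(y) - (C/v_d)|y-y_0|^2$; since the subtracted term is $C^1$ with vanishing gradient at $y_0$, Fermat's rule for the generalized gradient together with the sum rule yield $0 \in \partial_C\psi(y_0) = \partial_C\varphi(y_0) + v_d^{-1}v'$, i.e. $\xi \in \partial_C\varphi(y_0)$.

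To conclude, I would invoke the Rademacher-based characterization $\partial_C\varphi(y_0) = \conv\{\lim_i \nabla\varphi(y_i) : y_i \to y_0,\ \varphi \text{ differentiable at each } y_i,\ \nabla\varphi(y_i) \text{ convergent}\}$ together with Carath\'eodory's theorem to write $\xi = \sum_{j=1}^m \beta_j \zeta_j$ with $\beta_j \ge 0$, $\sum_j \beta_j = 1$, and $\zeta_j = \lim_i \nabla\varphi(y_i^{(j)})$ along sequences $y_i^{(j)} \to y_0$. At a differentiability point of $\varphi$ the (a.e.-defined) outward unit normal of $\partial D$ equals $Q^{-1}\big[(-\nabla\varphi(y),1)/\sqrt{1+|\nabla\varphi(y)|^2}\big]$, so with $x_i^{(j)} := Q^{-1}(y_i^{(j)},\varphi(y_i^{(j)})) \to p = P_D(x)$ we get $\lim_i \bn_D(x_i^{(j)}) = Q^{-1}\big[(-\zeta_j,1)/\sqrt{1+|\zeta_j|^2}\big]$. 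Since $Qv = (v',v_d) = v_d(-\xi,1)$, applying $v_d^{-1}Q^{-1}$ and expanding $-\xi = \sum_j \beta_j(-\zeta_j)$ gives
\begin{equation*}
v = v_d \sum_{j=1}^m \beta_j\, Q^{-1}(-\zeta_j,1) = \sum_{j=1}^m \alpha_j \lim_{i\to\infty} \bn_D(x_i^{(j)}), \qquad \alpha_j := v_d\, \beta_j \sqrt{1+|\zeta_j|^2} \ge 0,
\end{equation*}
and normalizing both sides (the right-hand sum equals $v \ne 0$) yields the asserted identity.

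The step I expect to be the main obstacle is making the nonsmooth-analysis input fully rigorous while respecting that the graph representation --- and hence every statement about $\bn_D$ and every inequality above --- is valid only in a neighborhood of $p$: in particular the identification of the measure-theoretic normal of $\partial D$ with $(-\nabla\varphi,1)/\sqrt{1+|\nabla\varphi|^2}$ at differentiability points of $\varphi$, and the characterization of $\partial_C\varphi(y_0)$ as the convex hull of limits of nearby gradients. These are standard (Clarke, Grisvard, Evans--Gariepy) but must be quoted carefully. An alternative that sidesteps the Clarke calculus altogether is to mollify $\varphi$ to a smooth $\varphi_\varepsilon$, apply the classical $C^2$ result (\cite{gilbarg_elliptic_2001}, Theorem~14.16) to the nearest-point projection onto the corresponding smoothed domain, and pass to the limit --- using the \emph{uniqueness} of $P_D(x)$ to force the smoothed foot points to converge to $p$, and the fact that any accumulation point of $\nabla\varphi_\varepsilon(y_0)$ lies in $\partial_C\varphi(y_0)$; I would expect the bookkeeping there (constructing the smoothed domains, Hausdorff convergence of their boundaries) to be of comparable length.
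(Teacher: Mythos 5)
Your argument is correct, and it rests on the same machinery as the paper's appendix proof: pass to local Lipschitz-graph coordinates around the foot point, invoke the Clarke generalized gradient of $\varphi$ together with its characterization as $\conv$ of limits of gradients at nearby differentiability points, and translate back via the formula $\bn_D = Q^{-1}\bigl[(-\nabla\varphi,1)/\sqrt{1+|\nabla\varphi|^2}\bigr]$. The one substantive difference is where the first-order condition is applied: the paper observes that $y^*$ minimizes the squared-distance functional $\Phi(y)=\tfrac12|(y,\varphi(y))-x|^2$ and uses $0\in\partial\Phi(y^*)$ with the limiting-gradient formula for $\nabla\Phi$, whereas you derive $-v'/v_d\in\partial_C\varphi(y_0)$ from the exterior-ball (proximal-normal) inequality and Fermat's rule for a tilted, quadratically penalized copy of $\varphi$. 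Your route is slightly longer but buys something the paper leaves implicit: the explicit verification that $v_d>0$, i.e.\ that the vertical component $x_n-\varphi(y^*)$ of $x-P_D(x)$ in the rotated frame is strictly positive. The paper's final identity carries exactly this factor inside the coefficients $\alpha_j$, so its claim that the $\alpha_j$ are nonnegative (and the normalization step) silently depends on it; your exterior-ball argument supplies the missing sign check. Both proofs otherwise coincide, including the use of Carath\'eodory/convex-combination bookkeeping and the concluding normalization.
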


Before proving the uniform trace inequality, we collect a few more basic properties of $d_D$, which help tie Lemmas \ref{lemma:characterizationoftraceconstant} and \ref{lemma:characterizationofminimizers} together by characterizing the normal vectors $\bn_\varepsilon(x)$ to the $\varepsilon$-level sets of $d_D$ as being almost-everywhere parallel to the direction vector $x - P_D(x)$. Define the signed distance function $\rho:\RR^d \to \RR$ by $$\rho(x) = \begin{cases}
    d_D(x) & x \in D^c \\
    -d_{D^c}(x) & x \in D.
\end{cases}$$
Further define, for all $\varepsilon > 0$,
$$U_\varepsilon = \{x\in \RR^d : \rho(x) < \varepsilon\},\qquad L_\varepsilon = \{x\in\RR^d : \rho(x) < -\varepsilon\}$$ so that $$\partial U_\varepsilon = \{x\in \RR^d : \rho(x) = \varepsilon\},\qquad  \partial L_\varepsilon = \{x\in\RR^d : \rho(x) = -\varepsilon\}.$$ According to \cite{doktor_approximation_1976}, Theorem 4.1, $U_\varepsilon$ and $L_\varepsilon$ are Lipschitz domains for $\varepsilon$ sufficiently small, and therefore have a.e. well-defined normal vectors. First considering $U_\varepsilon$, we can characterize those normal vectors as follows:
\begin{enumerate}
\item $\rho$ is differentiable at $x\in \overline{D}^c$ if and only if the projection $P_D(x)$ is single-valued (see, e.g., \cite{fitzpatrick_metric_1980}); at any such point $x$ we have $\nabla d_D(x) = \frac{x - P_D(x)}{|x - P_D(x)|}$.
\item Since $d_D$ is Lipschitz, the set of points where $d_D$ is not differentiable is measure zero in $\RR^d$; it follows that for almost every $\varepsilon > 0$, $d_D$ is differentiable almost everywhere on the $\varepsilon$ level set of $\rho$, $\partial U_\varepsilon$.
\item  At such $\varepsilon$, we conclude from (1) and (2) that the outward pointing normal vector $\bn_\varepsilon(x)$ for $U_\varepsilon$ coincides almost everywhere with $\frac{x - P_D(x)}{|x - P_D(x)|}$.
\end{enumerate}
Following a similar argument, we can show that the outward pointing normal vector of $L_\varepsilon$ coincides almost everywhere with $\frac{P_{D^c}(x) - x}{|P_{D^c}(x) - x|}$.

\begin{lemma}[Uniform trace inequality] \label{lemma:uniformtraceinequality} 
    There exists $\varepsilon_0 > 0$, such that for almost every $\varepsilon < \varepsilon_0$, the constants $C_{U_\varepsilon}$ and $C_{L_\varepsilon}$ in the trace inequalities $$\left(\int_{\partial U_\varepsilon} |f|^p dS(x)\right)^{\frac{1}{p}} \leq C_{U_\varepsilon}(p)\norm{f}_{1,p,U_\varepsilon}\qquad \left(\int_{\partial L_\varepsilon} |f|^p dS(x)\right)^{\frac{1}{p}} \leq C_{L_\varepsilon}(p)\norm{f}_{1,p,L_\varepsilon}$$ can be chosen independent of $\varepsilon$. That is, there exists $C_{\varepsilon_0}(p)$ such that $C_{U_\varepsilon},C_{L_\varepsilon} \leq C_{\varepsilon_0}(p)$ for almost every $\varepsilon < \varepsilon_0$. Moreover, by choosing $\varepsilon_0$ sufficiently small, $C_{\varepsilon_0}$ can be chosen arbitrarily close to $\frac{\norm{\bmu}_{1,\infty,\RR^d}}{\delta}$, where $\bmu = \bmu_D$ and $\delta = \delta_D$ are any vector field and positive constant satisfying \eqref{eq:traceineqnormal} from Lemma \ref{lemma:characterizationoftraceconstant}.
    \begin{proof}

    We will prove the inequality for $U_\varepsilon$, since the proof for $L_\varepsilon$ is completely analogous using the result of Lemma \ref{lemma:characterizationoftraceconstant} as below and applying Lemma \ref{lemma:characterizationofminimizers} to $\overline{D}^c$.
    
    To prove the uniform trace theorem for $U_\varepsilon$, we will use Lemma \ref{lemma:characterizationoftraceconstant} to show that there exists $\varepsilon_0$ such that, for almost every $\varepsilon < \varepsilon_0$, $\bmu$ and $\delta$ satisfy the inequality from \eqref{eq:traceineqnormal} for almost every $x \in U_\varepsilon$. To that end, we choose $\varepsilon_0$ such that $U_\varepsilon$ is Lipschitz for $\varepsilon < \varepsilon_0$ and so that $\varepsilon_0 < \frac{\delta}{|\bmu|_{1,\infty,\RR^d}}$.
    
    Let $\varepsilon < \varepsilon_0$ be such that $\bn_\varepsilon(x) = \frac{x - P_D(x)}{|x - P_D(x)|}$ for almost every $x\in \partial U_\varepsilon$, and let $x\in \partial U_\varepsilon$ be such a point. Then by Lemma \ref{lemma:characterizationofminimizers}, there exist sequences $x^{(j)}_i\to P_D(x)$ and constants $\alpha_j \geq 0$, $j=1,2,...,m$, such that $$\bn_\varepsilon(x)  = \frac{\sum_{j=1}^m \alpha_j \lim_{i\to\infty} \bn_D(x_i^{(j)})}{\left|\sum_{j=1}^m \alpha_j \lim_{i\to\infty} \bn_D(x_i^{(j)})\right|},$$ Using the continuity of $\bmu$, we have that $\lim_{i\to\infty} \bmu(P_D(x)) \cdot \bn_D(x_i^{(j)}) > \delta$, so the triangle inequality implies $$\bmu(P_D(x))\cdot \bn_\varepsilon(x) = \frac{\sum_{j=1}^m \alpha_j \lim_{i\to\infty} \bmu(P_D(x)) \cdot \bn_D(x_i^{(j)})}{\left|\sum_{j=1}^m \alpha_j \lim_{i\to\infty} \bn_D(x_i^{(j)})\right|} > \delta.$$ Thus finally \begin{align*}
\bn_\varepsilon(x) \cdot \bmu(x) &= \bn_\varepsilon(x) \cdot \bmu(P_D(x)) + \bn_\varepsilon(x)\cdot [\bmu(x) - \bmu(P_D(x))]  \\&> \delta - \varepsilon_0 |\bmu|_{1,\infty,\RR^d} > 0.      
    \end{align*}
     Hence, applying Lemma \ref{lemma:characterizationoftraceconstant} the constant in the trace inequality can be chosen uniformly as $C_{\varepsilon_0}(p) = C(p)\frac{|\bmu|_{1,\infty,\RR^d}}{\delta - \varepsilon_0 |\bmu|_{1,\infty,\RR^d}}$ for $\varepsilon < \varepsilon_0$. As $\varepsilon_0 \to 0$, the upper bound $C(p)\frac{|\bmu|_{1,\infty,\RR^d}}{\delta - \varepsilon_0 |\bmu|_{1,\infty,\RR^d}}$ converges to $ C(p)\frac{|\bmu|_{1,\infty,\RR^d}}{\delta},$ as claimed.
    \end{proof}
\end{lemma}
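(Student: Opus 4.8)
The plan is to exhibit, for every sufficiently small $\varepsilon$, one and the same vector field $\bmu$ that verifies the hypothesis \eqref{eq:traceineqnormal} of Lemma~\ref{lemma:characterizationoftraceconstant} on all of the level sets $U_\varepsilon$ and $L_\varepsilon$ simultaneously, with the accompanying constant $\delta$ degraded by at most $O(\varepsilon)$. Fix once and for all $\bmu=\bmu_D\in W^{1,\infty}(\RR^d;\RR^d)$ and $\delta=\delta_D>0$ with $\bmu\cdot\bn_D>\delta$ a.e.\ on $\partial D$, supplied by Lemma~\ref{lemma:characterizationoftraceconstant}(1). If we can show that $\bmu\cdot\bn_\varepsilon\ge\delta-\varepsilon\,|\bmu|_{1,\infty,\RR^d}$ a.e.\ on $\partial U_\varepsilon$, then Lemma~\ref{lemma:characterizationoftraceconstant}(2), together with $\|\bmu\|_{1,\infty,U_\varepsilon}\le\|\bmu\|_{1,\infty,\RR^d}$, immediately gives $C_{U_\varepsilon}(p)\le C(p)\,\|\bmu\|_{1,\infty,\RR^d}/(\delta-\varepsilon_0|\bmu|_{1,\infty,\RR^d})$ for all $\varepsilon<\varepsilon_0$; this is the asserted uniform bound, and it tends to $C(p)\|\bmu\|_{1,\infty,\RR^d}/\delta$ as $\varepsilon_0\to 0$.

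First I would choose $\varepsilon_0>0$ small enough that $U_\varepsilon$ and $L_\varepsilon$ are Lipschitz for all $\varepsilon<\varepsilon_0$ (by \cite{doktor_approximation_1976}, as recalled above) and that $\varepsilon_0|\bmu|_{1,\infty,\RR^d}<\delta$, and then restrict to the full-measure set of $\varepsilon<\varepsilon_0$ for which $d_D$ is differentiable a.e.\ on $\partial U_\varepsilon$. By the properties (1)--(3) recorded just above the statement, for such $\varepsilon$ the outward normal of $U_\varepsilon$ is $\bn_\varepsilon(x)=(x-P_D(x))/|x-P_D(x)|$ for a.e.\ $x\in\partial U_\varepsilon$, and $|x-P_D(x)|=\rho(x)=\varepsilon$ there. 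Fix such an $\varepsilon$ and such an $x$. Lemma~\ref{lemma:characterizationofminimizers} writes $\bn_\varepsilon(x)$ as the normalization of $\sum_{j=1}^m\alpha_j\,\bn_D^{(j)}$ with $\alpha_j\ge 0$ and $\bn_D^{(j)}=\lim_i\bn_D(x_i^{(j)})$ for sequences $x_i^{(j)}\to P_D(x)$ in $\partial D$, and we may take the $x_i^{(j)}$ in the set where $\bmu\cdot\bn_D>\delta$. Continuity of $\bmu$ then gives $\bmu(P_D(x))\cdot\bn_D^{(j)}=\lim_i\bmu(x_i^{(j)})\cdot\bn_D(x_i^{(j)})\ge\delta$, and since the $\bn_D^{(j)}$ are unit vectors and $\alpha_j\ge 0$ the triangle inequality yields $\big|\sum_j\alpha_j\bn_D^{(j)}\big|\le\sum_j\alpha_j$, so $\bmu(P_D(x))\cdot\bn_\varepsilon(x)\ge\delta$.

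It then remains to move from $P_D(x)$ to $x$: since $\bmu\in W^{1,\infty}(\RR^d;\RR^d)$ is Lipschitz with constant $|\bmu|_{1,\infty,\RR^d}$, $|\bn_\varepsilon(x)|=1$, and $|x-P_D(x)|=\varepsilon$,
\[
\bmu(x)\cdot\bn_\varepsilon(x)\;\ge\;\bmu(P_D(x))\cdot\bn_\varepsilon(x)-|\bmu(x)-\bmu(P_D(x))|\;\ge\;\delta-\varepsilon\,|\bmu|_{1,\infty,\RR^d}\;>\;\delta-\varepsilon_0|\bmu|_{1,\infty,\RR^d}\;>\;0 .
\]
Hence $\bmu$ and $\delta_\varepsilon:=\delta-\varepsilon_0|\bmu|_{1,\infty,\RR^d}$ satisfy \eqref{eq:traceineqnormal} for $U_\varepsilon$, and Lemma~\ref{lemma:characterizationoftraceconstant}(2) delivers the uniform bound on $C_{U_\varepsilon}$; setting $C_{\varepsilon_0}(p)=C(p)\|\bmu\|_{1,\infty,\RR^d}/(\delta-\varepsilon_0|\bmu|_{1,\infty,\RR^d})$ and letting $\varepsilon_0\to 0$ gives the last sentence of the lemma. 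The case of $L_\varepsilon$ is entirely parallel: its outward normal is a.e.\ $(P_{D^c}(x)-x)/|P_{D^c}(x)-x|$, and applying Lemma~\ref{lemma:characterizationofminimizers} to $\overline{D}^c$ (whose outward normals are $-\bn_D$) again expresses this as the normalization of a nonnegative combination of limiting normals $\bn_D^{(j)}$ at points tending to $P_{D^c}(x)\in\partial D$, so the same $\bmu$ and the same $\delta_\varepsilon$ serve.

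The real content, once Lemmas~\ref{lemma:characterizationoftraceconstant} and~\ref{lemma:characterizationofminimizers} are taken for granted, is the uniformity: that one vector field works for \emph{all} level sets with an $\varepsilon$-independent lower bound on $\bmu\cdot\bn_\varepsilon$. I expect the two delicate points to be (a) the ``almost every $\varepsilon$'' hypothesis, which is needed both so that $U_\varepsilon$ (resp.\ $L_\varepsilon$) is Lipschitz and so that $d_D$ (resp.\ $d_{D^c}$) is a.e.\ differentiable on the level set, the latter being what identifies $\bn_\varepsilon$ with the projection direction; and (b) estimating $\bmu\cdot\bn_D$ at the projection point $P_D(x)$, where $\bn_D$ itself may fail to exist on an open set of $x$'s --- this is precisely why Lemma~\ref{lemma:characterizationofminimizers} is invoked rather than the classical $C^2$ identity $\bn_\varepsilon(x)=\bn_D(P_D(x))$, which would make the argument shorter but would not apply to the merely Lipschitz set $\hat{K}$ of ultimate interest.
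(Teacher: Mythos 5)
Your proposal is correct and follows essentially the same route as the paper's proof: fix $\bmu,\delta$ from Lemma \ref{lemma:characterizationoftraceconstant}, identify $\bn_\varepsilon$ with the projection direction on a.e.\ level set, invoke Lemma \ref{lemma:characterizationofminimizers} to get $\bmu(P_D(x))\cdot\bn_\varepsilon(x)\geq\delta$, and then pay an $\varepsilon_0|\bmu|_{1,\infty,\RR^d}$ penalty via Lipschitz continuity of $\bmu$. Your explicit observation that $\bigl|\sum_j\alpha_j\bn_D^{(j)}\bigr|\leq\sum_j\alpha_j$ is a slightly more careful justification of the "triangle inequality" step than the paper gives, but the argument is the same.
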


\begin{remark}
    In contrast to our pointwise result in Lemma \ref{lemma:characterizationofminimizers}, Theorem 4.1 of \cite{doktor_approximation_1976} gives an $L^p$, $p < \infty$, convergence result relating the normal vectors of a Lipschitz domain $D$ to those of $U_\varepsilon$ as $\varepsilon\to 0$. To directly use that result would have been an alternative approach to proving Lemma \ref{lemma:uniformtraceinequality}. 
\end{remark}

\subsection{A refined error bound}

Finally, with the help of Lemma \ref{lemma:uniformtraceinequality}, we can now improve the error bound near $\partial K$ in the situation when $u_h$ and $v_h$ are $V_h$ approximations of functions in $H^2(\Omega)$. First, we show how the H\"older inequality employed for Theorem \ref{thm:superconvergence_lemma} can be formally replaced with a trace inequality, trading off derivatives instead of powers of $p$.

\begin{proposition}\label{proposition:controlonKh}
Assume that $f\in W^{1,p}(\Omega)$. Then, there exists $h_0$ and $C = C(h_0, p) > 0$ such that for all $h < h_0$, $$|f|_{0,p,K_h} \leq Ch^{\frac{1}{p}}\norm{f}_{1,p,\Omega}.$$

\begin{proof}
    In the following, we define $\rho$, $U_\varepsilon$, and $L_\varepsilon$ as in the previous subsection with $D = \hat{K}$. Observe that for each $h$, $$K_h\subseteq \bigcup_{-h < \varepsilon < h} \{\rho(x) = \varepsilon\} = \bigcup_{0 \leq \varepsilon < h} \partial U_\varepsilon \cup \partial L_\varepsilon =: X_h.$$ Let $h_0$ be a constant such that the uniform trace inequality of Lemma \ref{lemma:uniformtraceinequality} applies on $U_\varepsilon$ and $L_\varepsilon$ for $h < h_0$, and let $\ext{\Omega} =\Omega \cup X_{h_0}$. By the Sobolev extension theorem, there exists an extension $\ext{f} \in W^{1,p}(\ext{\Omega})$ of $f$ such that $\norm{\ext{f}}_{1,p,\ext{\Omega}} \leq C\norm{f}_{1,p,\Omega}$, where $C$ is independent of $f$. Therefore, noting that $\rho$ is Lipschitz with constant 1, the co-area formula implies that $$|f|_{0,p,K_h} \leq |\ext{f}|_{0,p,X_h} = \left(\int_{-h}^h \int_{\{\rho = \varepsilon\}} |\ext{f}|^p dS_\varepsilon d\varepsilon\right)^{\frac{1}{p}}.$$ Applying the uniform trace inequality on $\{\rho = \varepsilon\}$ makes the integrand independent of $h$, so that $$\left(\int_{-h}^h \int_{\{\rho = \varepsilon\}} |\ext{f}|^p dS_\varepsilon d\varepsilon\right)^{\frac{1}{p}} \leq C(h_0,p)\left(\int_{-h}^h\norm{\ext{f}}_{1,p,\ext{\Omega}}^p d\varepsilon\right)^{\frac{1}{p}} \leq Ch^{\frac{1}{p}} \norm{f}_{1,p,\Omega},$$ as needed.
\end{proof}
\end{proposition}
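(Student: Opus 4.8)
The plan is to bound $|f|_{0,p,K_h}$ by first replacing $K_h$ with the slightly larger but more geometrically tractable set $X_h = \bigcup_{0 \leq \varepsilon < h}(\partial U_\varepsilon \cup \partial L_\varepsilon)$, the union of level sets of the signed distance function $\rho$ to $\hat K$ within distance $h$ of $\partial\hat K$. The containment $K_h \subseteq X_h$ follows from Assumption \ref{assumption:lipschitz_boundary_assumption} together with the fact that each $T \in \mathcal{X}_{K,h}$ has diameter at most $h$ and meets $K$, hence meets $\partial K \subseteq \partial \hat K$ (or lies entirely inside $\hat{K}$, but then it still sits within distance $h$ of $\partial\hat K$ — one should be slightly careful here and note that it is really the elements touching $\partial K$ that matter for the low-order quadrature error, or alternatively absorb the interior elements into the $O(h^2)$ term, though the cleanest route is simply to observe $K_h$ lies in the $h$-tube around $\partial\hat K$). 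Since $\rho$ is $1$-Lipschitz, every point within distance $h$ of $\partial\hat K$ has $|\rho| < h$, giving the inclusion.

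Next I would extend $f$ off $\Omega$. Since $X_{h_0}$ is a bounded neighborhood of $\partial\hat K$ and $\Omega$ is Lipschitz, the Sobolev extension theorem provides $\ext f \in W^{1,p}(\ext\Omega)$ with $\ext\Omega = \Omega \cup X_{h_0}$ and $\norm{\ext f}_{1,p,\ext\Omega} \leq C\norm{f}_{1,p,\Omega}$, the constant depending only on $\ext\Omega$ (hence on $h_0$) and $p$. Then the co-area formula applied to the $1$-Lipschitz function $\rho$ turns the bulk integral of $|\ext f|^p$ over $X_h$ into $\int_{-h}^{h} \int_{\{\rho = \varepsilon\}} |\ext f|^p\, dS_\varepsilon\, d\varepsilon$, reducing the problem to controlling the trace of $\ext f$ on each level set uniformly in $\varepsilon$.

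The crux is that control: this is exactly what Lemma \ref{lemma:uniformtraceinequality} provides. For almost every $\varepsilon < \varepsilon_0$ (and one takes $h_0 \leq \varepsilon_0$), the level set $\partial U_\varepsilon$ (resp. $\partial L_\varepsilon$) is a Lipschitz hypersurface and its trace constant is bounded by a single $C_{\varepsilon_0}(p)$ independent of $\varepsilon$, so that $\int_{\{\rho = \varepsilon\}} |\ext f|^p\, dS_\varepsilon \leq C_{\varepsilon_0}(p)^p \norm{\ext f}_{1,p,\ext\Omega}^p$ — note the right-hand side no longer depends on $\varepsilon$ (we crudely bound $\norm{\ext f}_{1,p,U_\varepsilon} \leq \norm{\ext f}_{1,p,\ext\Omega}$). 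Integrating this constant over $\varepsilon \in (-h,h)$ contributes exactly the factor $2h$, and taking $p$-th roots yields $|f|_{0,p,K_h} \leq (2h)^{1/p} C_{\varepsilon_0}(p)\, C\,\norm{f}_{1,p,\Omega} = C h^{1/p}\norm{f}_{1,p,\Omega}$, as claimed.

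The main obstacle — already discharged by the preceding subsection — is the uniformity of the trace constants over the family $\{\partial U_\varepsilon\}$; without Lemma \ref{lemma:uniformtraceinequality} one would only get $\varepsilon$-dependent constants that could blow up as $\varepsilon \to 0$, destroying the estimate. The only remaining care points here are measure-theoretic bookkeeping: the uniform trace bound holds for \emph{almost every} $\varepsilon$, which is harmless since we integrate in $\varepsilon$; the co-area formula requires $\ext f \in W^{1,p}$, which the extension supplies; and one should make sure the neighborhood $X_{h_0}$ on which the extension lives genuinely contains all the level sets $\partial U_\varepsilon, \partial L_\varepsilon$ for $\varepsilon < h < h_0$, which it does by construction.
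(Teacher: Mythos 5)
Your proposal is correct and follows essentially the same route as the paper's proof: enlarge $K_h$ to the tube $X_h$ of level sets of the signed distance function, extend $f$ to $\ext{\Omega}$, apply the co-area formula, and invoke the uniform trace inequality of Lemma \ref{lemma:uniformtraceinequality} to pull an $\varepsilon$-independent constant out of the inner integral, yielding the factor $(2h)^{1/p}$. Your extra care in justifying the inclusion $K_h\subseteq X_h$ (and in noting that only elements meeting $\partial K$ genuinely matter) is a welcome addition to a step the paper simply asserts.
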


\begin{remark}
We mention here a few results related to Proposition \ref{proposition:controlonKh} that have appeared in previous literature, primarily in the context of finite element error analysis for piecewise linear approximation of a smooth domain $\Omega$. Specifically, a similar result for $d=2$ is stated as Lemma 2.1 in \cite{zhang_analysis_2008}, and later \cite{kashiwabara_penalty_2016} gives a detailed proof for arbitrary $d$ in their Theorem 8.3. The authors of \cite{kashiwabara_penalty_2016} assume that the domain is $C^{1,1}$, however, and their proof uses facts specific to that case. 
\end{remark}
\begin{theorem}\label{thm:boundary_skin_estimate}
    Assume that $K$ satisfies Assumption \ref{assumption:lipschitz_boundary_assumption}. Then, there exists $h_0 > 0$ such that whenever $h < h_0$, for every $u_h,w_h\in V_h$ and $u,w\in H^2(\Omega)$ we have \begin{align*}
|u_hw_h|_{1,1,K_h} \leq C[\norm{u - u_h}_{1,K_h}\norm{w - w_h}_{1,K_h} &+ h^{\frac{1}{2}}\norm{u}_{2,\Omega}\norm{w - w_h}_{1,K_h} \\&+ h^{\frac{1}{2}}\norm{w}_{2,\Omega}\norm{u - u_h}_{1,K_h} +  h\norm{u}_{2,\Omega} \norm{w}_{2,\Omega}].        
    \end{align*} 
      \begin{proof}
    The Cauchy-Schwarz inequality implies that $$|u_hw_h|_{1,1,K_h} \leq C\norm{u_h}_{1,K_h}\norm{v_h}_{1,K_h}.$$ Then, we have that \begin{align}
    \norm{u_h}_{1,K_h}\norm{v_h}_{1,K_h} &\leq (\norm{u - u_h}_{1,K_h} + \norm{u}_{1,K_h})(\norm{w - w_h}_{1,K_h} + \norm{w}_{1,K_h})\label{eq:initialestimateonKh}
\end{align} 
For $h$ sufficiently small, we can apply Proposition \ref{proposition:controlonKh} to $u,w$, and their partial derivatives $\frac{\partial u}{\partial x_i},\frac{\partial w}{\partial x_i}$ for $i = 1,...,d$, to obtain $$\norm{u}_{1,K_h} \leq Ch^{\frac{1}{2}} \norm{u}_{2,\Omega},\qquad \norm{w}_{1,K_h} \leq Ch^{\frac{1}{2}} \norm{w}_{2,\Omega}.$$ Combining this estimate with \eqref{eq:initialestimateonKh} yields the result.
\end{proof}
\end{theorem}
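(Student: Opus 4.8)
The plan is to treat this as a near-immediate consequence of the boundary-skin bound of Proposition~\ref{proposition:controlonKh}, with only the bilinear product structure to unwind at the start. Since the left-hand side $|u_hw_h|_{1,1,K_h}$ is a $W^{1,1}$-seminorm of a product, the first step is to invoke the product rule $\nabla(u_hw_h)=u_h\nabla w_h+w_h\nabla u_h$, which is legitimate because $V_h\subseteq H^1(\Omega)\cap L^\infty(\Omega)$, so $u_h\nabla w_h$ and $w_h\nabla u_h$ are honest $L^2(K_h)$ functions on the (possibly irregular) strip $K_h$; then Cauchy--Schwarz on $L^2(K_h)$ applied to each of the two resulting terms gives
\[
|u_hw_h|_{1,1,K_h}\leq \norm{u_h}_{0,2,K_h}\norm{w_h}_{1,2,K_h}+\norm{w_h}_{0,2,K_h}\norm{u_h}_{1,2,K_h}\leq C\norm{u_h}_{1,K_h}\norm{w_h}_{1,K_h}.
\]
It then remains to control each factor $\norm{u_h}_{1,K_h}$, $\norm{w_h}_{1,K_h}$ on the thin set $K_h$.

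For that I would peel off the $V_h$-approximation error using the triangle inequality, $\norm{u_h}_{1,K_h}\leq\norm{u-u_h}_{1,K_h}+\norm{u}_{1,K_h}$ and similarly for $w_h$. The error terms $\norm{u-u_h}_{1,K_h}$ and $\norm{w-w_h}_{1,K_h}$ are left as they are, since they already carry exactly the localization to $K_h$ present on the right-hand side of the claimed inequality. For the smooth terms, note that $u\in H^2(\Omega)$ means both $u$ and each first-order partial $\partial u/\partial x_i$ lie in $H^1(\Omega)$, so Proposition~\ref{proposition:controlonKh} with $p=2$ applies to $u$ and to each of these partials, giving $\norm{u}_{1,K_h}\leq Ch^{1/2}\norm{u}_{2,\Omega}$ for all $h<h_0$; likewise $\norm{w}_{1,K_h}\leq Ch^{1/2}\norm{w}_{2,\Omega}$. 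Substituting these into
\[
\norm{u_h}_{1,K_h}\norm{w_h}_{1,K_h}\leq\bigl(\norm{u-u_h}_{1,K_h}+\norm{u}_{1,K_h}\bigr)\bigl(\norm{w-w_h}_{1,K_h}+\norm{w}_{1,K_h}\bigr)
\]
and multiplying out produces precisely the four advertised terms.

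The point worth appreciating is that, in isolation, this statement is essentially bookkeeping: all of the substantive analysis --- the uniform trace inequality of Lemma~\ref{lemma:uniformtraceinequality}, which rests on the projection characterization in Lemma~\ref{lemma:characterizationofminimizers}, and the co-area argument underlying Proposition~\ref{proposition:controlonKh} --- has already been carried out. The only genuine obstacle, and the reason Assumption~\ref{assumption:lipschitz_boundary_assumption} is needed at this stage, is to make Proposition~\ref{proposition:controlonKh} applicable: one invokes the assumption so that the Lipschitz domain $\hat K$ with $(\partial K\cap\Omega)\subseteq\partial\hat K$ exists, and then takes the theorem's $h_0$ to be the threshold furnished by that proposition (a single $h_0$ suffices, since the $h_0$ there depends only on $\hat K$, not on the function being estimated, and only finitely many functions $u$, $w$, $\partial u/\partial x_i$, $\partial w/\partial x_i$ enter). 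Everything else --- the product rule, Cauchy--Schwarz, and the triangle inequality --- is routine.
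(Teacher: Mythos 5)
Your argument is correct and is essentially the paper's own proof: the same Cauchy--Schwarz bound $|u_hw_h|_{1,1,K_h}\leq C\norm{u_h}_{1,K_h}\norm{w_h}_{1,K_h}$ (which you justify in slightly more detail via the product rule), the same triangle-inequality splitting, and the same application of Proposition \ref{proposition:controlonKh} to $u$, $w$, and their first partials. No gaps.
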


Thus we have the following corollary, which combines Corollary \ref{corollary:initial_global_error_estimate} and Theorem \ref{thm:boundary_skin_estimate}:

\begin{corollary}\label{corollary:final_global_error_estimate}
    Let $u,w\in H^2(\Omega)$ and assume $u_h,w_h\in V_h$ satisfy the asymptotic error estimates \begin{equation}\label{eq:asympotic_error_estimates}
        \norm{u - u_h}_1 \leq Ch|u|_2,\qquad \norm{w - w_h}_1\leq Ch|w|_2.
    \end{equation}
    Then the quadrature error for \eqref{eq:quad_mass_lump} applied to $u_h,w_h$ satisfies the aymptotic quasi-optimal global error estimate
    \begin{equation}\label{eq:finalglobalerrestimate}
    \begin{aligned}
        |\quaderror(u_hw_h)| &\leq Ch^2\norm{u}_{2,\Omega}\norm{w}_{2,\Omega}.
    \end{aligned}
    \end{equation}
\end{corollary}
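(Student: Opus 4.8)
The plan is to combine the two pieces we already have: the split global error estimate from Corollary \ref{corollary:initial_global_error_estimate}, which isolates an $O(h^2)$ interior contribution plus an $O(h)$ contribution localized to $K_h$, and the refined interface bound from Theorem \ref{thm:boundary_skin_estimate}, which controls $|u_hw_h|_{1,1,K_h}$ in terms of the approximation errors $\norm{u-u_h}_{1,K_h}$, $\norm{w-w_h}_{1,K_h}$ and the $H^2$ norms of $u,w$. So the first step is simply to write
\begin{equation*}
|\quaderror(u_hw_h)| \leq Ch^2\sum_{T\in\mathcal{T}_h\setminus\mathcal{X}_{K,h}} \norm{u_h}_{2,T}\norm{w_h}_{2,T} + Ch\,|u_hw_h|_{1,1,K_h},
\end{equation*}
and then substitute the four-term bound of Theorem \ref{thm:boundary_skin_estimate} into the second summand. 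Multiplying that bound by $h$, the worst term is $h\cdot h^{\frac12}\norm{u}_{2,\Omega}\norm{w-w_h}_{1,K_h}$, and invoking the asymptotic hypothesis \eqref{eq:asympotic_error_estimates} (so that $\norm{w-w_h}_{1,K_h}\leq\norm{w-w_h}_{1,\Omega}\leq Ch|w|_2$) turns every term into something of size at least $h^2\norm{u}_{2,\Omega}\norm{w}_{2,\Omega}$: the first becomes $h\cdot(Ch|u|_2)(Ch|w|_2)=O(h^3)$, the two cross terms become $O(h^{5/2})$, and the last becomes $h\cdot h\norm{u}_2\norm{w}_2=O(h^2)$. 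Hence the $K_h$ contribution is $O(h^2)\norm{u}_{2,\Omega}\norm{w}_{2,\Omega}$ as desired.

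The remaining point is the interior sum $h^2\sum_{T\notin\mathcal{X}_{K,h}}\norm{u_h}_{2,T}\norm{w_h}_{2,T}$. Here I would use Cauchy–Schwarz over the elements to get $h^2\bigl(\sum_T\norm{u_h}_{2,T}^2\bigr)^{1/2}\bigl(\sum_T\norm{w_h}_{2,T}^2\bigr)^{1/2}$, and then bound each broken $H^2$ norm of the discrete function by the $H^2$ norm of the function it approximates: since $\norm{u_h}_{2,T}\leq \norm{u-u_h}_{2,T}+\norm{u}_{2,T}$ — or, more robustly, invoking the standard inverse-type / interpolation estimate \eqref{eq:interpolation_operator} together with the triangle inequality and the hypothesis \eqref{eq:asympotic_error_estimates} — one gets $\bigl(\sum_T\norm{u_h}_{2,T}^2\bigr)^{1/2}\leq C\norm{u}_{2,\Omega}$, and likewise for $w_h$. (The cleanest route is to note that on a fixed element $\norm{u_h}_{2,T}\lesssim h^{-1}\norm{u_h}_{1,T}$ by an inverse inequality, sum, and use $\norm{u_h}_1\leq\norm{u-u_h}_1+\norm{u}_1\leq C\norm{u}_{2,\Omega}$ from \eqref{eq:asympotic_error_estimates}; this costs a power of $h$ but there is a spare $h^2$ to absorb it — actually one needs to be slightly careful and instead bound $\norm{u_h}_{2,T}$ directly via the interpolation error and the second derivatives of $u$.) Either way the interior term is $O(h^2)\norm{u}_{2,\Omega}\norm{w}_{2,\Omega}$.

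The main obstacle — really the only nonroutine point — is making the interior sum argument airtight: $u_h,w_h$ are only assumed to satisfy the \emph{first-order} asymptotic estimate \eqref{eq:asympotic_error_estimates}, so one cannot directly control their broken second derivatives by those of $u,w$ without either an inverse inequality (which requires the nondegeneracy condition \eqref{eq:nondegeneracycondition}, implicitly in force via \eqref{eq:interpolation_operator}) or a more careful argument showing $\sum_T\norm{u_h}_{2,T}^2\leq C\norm{u}_{2,\Omega}^2$. Once that is pinned down, collecting the interior $O(h^2)$ term with the $O(h^2)$, $O(h^{5/2})$, $O(h^3)$ terms coming from the $K_h$ contribution and absorbing all higher powers of $h$ into the leading $h^2$ (valid for $h<h_0$) gives \eqref{eq:finalglobalerrestimate}, completing the proof.
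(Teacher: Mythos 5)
Your overall strategy is exactly the one the paper intends: the paper gives no explicit proof, stating only that the corollary ``combines'' Corollary \ref{corollary:initial_global_error_estimate} and Theorem \ref{thm:boundary_skin_estimate}, and your treatment of the $K_h$ contribution is correct --- multiplying the four-term bound of Theorem \ref{thm:boundary_skin_estimate} by $h$ and inserting \eqref{eq:asympotic_error_estimates} yields terms of order $h^3$, $h^{5/2}$, $h^{5/2}$, and $h^2$, all controlled by $Ch^2\norm{u}_{2,\Omega}\norm{w}_{2,\Omega}$.

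The gap is in your handling of the interior sum, and neither of your proposed fixes works as stated. The inverse-inequality route costs a factor of $h^{-1}$ for \emph{each} of $u_h$ and $w_h$, so $h^2\sum_T \norm{u_h}_{2,T}\norm{w_h}_{2,T}\lesssim h^2\cdot h^{-2}\norm{u_h}_{1}\norm{w_h}_{1}=O(1)$, which is not enough; and one cannot ``bound $\norm{u_h}_{2,T}$ directly via the interpolation error,'' because $u_h$ is not assumed to be an interpolant --- the only hypothesis is the first-order closeness \eqref{eq:asympotic_error_estimates}, which gives no control on broken second derivatives of $u_h$. The correct resolution is to bypass the (loosely stated) right-hand side of Corollary \ref{corollary:initial_global_error_estimate} and go back to the local estimate it is built from: Lemma \ref{lemma:L2consistency}, equation \eqref{eq:L2localconsistencysmooth}, bounds the local error on each $T\notin\mathcal{X}_{K,h}$ by $Ch^2\norm{u_h}_{1,T}\norm{w_h}_{1,T}$, i.e.\ by \emph{first-order} norms of the discrete functions. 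Summing and applying Cauchy--Schwarz over the elements gives $Ch^2\norm{u_h}_{1,\Omega}\norm{w_h}_{1,\Omega}$, and then the triangle inequality together with \eqref{eq:asympotic_error_estimates} gives $\norm{u_h}_{1,\Omega}\leq\norm{u-u_h}_{1,\Omega}+\norm{u}_{1,\Omega}\leq C\norm{u}_{2,\Omega}$ (and likewise for $w_h$), so the interior contribution is $Ch^2\norm{u}_{2,\Omega}\norm{w}_{2,\Omega}$ with no inverse estimates and no mesh nondegeneracy needed. With that substitution your argument closes.
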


\section{Convergence results and monotonicity for reaction-drift-diffusion equations}\label{section:reactiondiffusionequations}

Starting from the weak form of \eqref{eq:reaction-diffusion-parabolic-strong}, we denote the exact solution $u\in H^1_D(\Omega)$ as the unique solution to \begin{equation}\label{eq:reaction-diffusion-parabolic-weak}
\begin{aligned}
   a(u,v) &= -r(u,v) + \left<f, v\right>, \quad \text{for all }v\in H^1_D(\Omega),
\end{aligned}
\end{equation} where $H^1_D(\Omega)$ is the subspace of functions in $H^1(\Omega)$ with zero trace on $\Gamma_D$. We assume for simplicity homogeneous boundary conditions (i.e., $g_D,g_N \equiv 0$).

We will assume that the mesh $\mathcal{T}_h$ is aligned with $\Gamma_D$ and $\Gamma_N$, so that we can incorporate the Dirichlet conditions into $V_h$ by removing the basis functions with nodes touching the Dirichlet boundary. Then, we can define the Galerkin finite element solution $u_h^G$ associated with $V_h$ as the solution of reaction-diffusion equation  \begin{equation}\label{eq:reaction-diffusion-parabolic-weak-fem}
\begin{aligned}
   a(u_h^G,v_h) &= -r(u_h^G,v_h) + \left<f, v_h\right>, \quad \text{for all }v_h\in V_h,
\end{aligned}
\end{equation}where $\left<\cdot,\cdot\right>$ is the $L^2$ inner product on $\Omega$. 

Finally, our quadrature approximation to \eqref{eq:reaction-diffusion-parabolic-strong} is given by
 \begin{equation}\label{eq:reaction-diffusion-parabolic-weak-fem-quadrature}
\begin{aligned}
     a_h(u_h,v_h) &= -r_h(u_h,v_h) + \left<f,v_h\right> , \quad \text{for all }v_h\in V_h
\end{aligned}
\end{equation} 
where $a_h(u_h,v_h)$ is a quadrature approximation to the drift-diffusion operator. To state error estimates for the quadrature approximation \eqref{eq:reaction-diffusion-parabolic-strong}, we will need to assume that the discrete drift-diffusion operator $a_h$ satisfies similar approximation properties to $r_h$. In particular, we assume that 
\begin{equation}\label{eq:assumptions_on_discrete_transport_operator}
\begin{aligned}
    |a(w_h,v_h) - a_h(w_h,v_h)| &\leq Ch\norm{w_h}_1\norm{v_h}_1 \quad &&\text{for all $w_h,v_h\in V_h$}, \\
    |a(w_h,v_h) - a_h(w_h,v_h)| &\leq Ch^{\frac{3p-2}{2p}}\norm{w_h}_{1,p}\norm{v_h}_1 \quad &&\text{for all $w_h,v_h\in V_h$},  \\
    |a(u_h,w_h) - a_h(u_h,w_h)| &\leq Ch^2\norm{u}_2\norm{w}_2, 
\end{aligned}
\end{equation}
where in the last line $u_h$ and $w_h$ again satisfy the asymptotic error estimates \eqref{eq:asympotic_error_estimates}.

We also assume that $\alpha$ and $\bbeta$ are continuous on $\overline{\Omega}$, and in addition that $\alpha$ is positive definite. These conditions are not necessarily optimal, and in future work we hope to relax our assumptions somewhat to allow discontinuous or singular coefficients $\bbeta$. However, for our purposes in this work the current assumptions are enough to imply the coercivity condition for $w_h\in V_h$:
\begin{align}\label{eq:inf-sup}
    \sup_{v_h\in V_h} \frac{a_h(w_h,v_h) + r_h(w,v_h)}{\norm{v_h}_1} > c_0\norm{w_h}_1
\end{align}
where $c_0 > 0$ is independent of $h$. Therefore, there is a unique solution each problem associated with \eqref{eq:reaction-diffusion-parabolic-weak-fem-quadrature}. 

Using the convergence estimates of Corollaries \ref{corollary:initial_global_error_estimate} and \ref{corollary:final_global_error_estimate}, the error analysis for \eqref{eq:reaction-diffusion-parabolic-weak-fem-quadrature} is fairly routine. We give convergence and superconvergence results in the $H^1(\Omega)$ norm in Theorem \ref{thm:H1_reaction_diffusion_convergence}, and an $L^2(\Omega)$ convergence proof in Theorem \ref{thm:abstract_L2_convergence_theorem}.

\begin{theorem}\label{thm:H1_reaction_diffusion_convergence}
Given Assumption \ref{assumption:lipschitz_boundary_assumption} and the inf-sup condition \ref{eq:inf-sup}, there exists $C > 0$ such that $$\norm{u^G_h - u_h}_1 \leq Ch\norm{u}_1,\qquad \norm{u - u_h}_1 \leq C\inf_{v_h\in V_h} \norm{v_h - u}_1 +Ch\norm{u}_1.$$ If $u\in W^{1,p}(\Omega)$ for some $2 < p \leq \infty$ and there exists an interpolation operator $I_h$ satisfying \eqref{eq:interpolation_operator}, then we have the strengthened supercloseness estimate $$\norm{u^G_h - u_h}_1 \leq Ch^{\frac{3p-2}{2p}}\norm{u}_{1,p}.$$

 \begin{proof}
    The first result essentially follows from the first Strang lemma combined with Lemma \ref{lemma:H1consistency} and \eqref{eq:assumptions_on_discrete_transport_operator}: \begin{align*}
         c_0\norm{u_h^G - u_h}_1 &\leq \sup_{v_h\in V_h} \frac{a_h(u_h - u_h^G,v_h) + r_h(u_h - u_h^G, v_h)}{\norm{v_h}_1} 
        \\&= \sup_{v_h\in V_h} \frac{\left<f,v_h\right> - a_h(u_h^G,v_h) - r_h(u_h^G,v_h)}{\norm{v_h}_1} 
        \\&= \sup_{v_h\in V_h} \frac{a(u_h^G,v_h) + r(u_h^G,v_h) - a_h(u_h^G,v_h) - r_h(u_h^G,v_h)}{\norm{v_h}_1} 
        \\&\leq Ch\norm{u_h^G}_1.
     \end{align*} 
Since standard estimates imply $\norm{u_h^G - u}_1 \leq C\inf_{v_h\in V_h} \norm{v_h - u}_1$, the first result follows. 

To prove the strengthened estimates, we simply modify the last line of the above computation by applying Theorem \ref{thm:superconvergence_lemma} and \eqref{eq:assumptions_on_discrete_transport_operator}: \begin{align*}
         c_0\norm{u_h^G - u_h}_1 &\leq  \sup_{v_h\in V_h} \frac{a(u_h^G,v_h) + r(u_h^G,v_h) - a_h(u_h^G,v_h) - r_h(u_h^G,v_h)}{\norm{v_h}_1} 
        \\&\leq Ch^{\frac{3p - 2}{2p}}\norm{u_h^G}_{1,p} \leq Ch^{\frac{3p - 2}{2p}}\norm{u}_{1,p},
\end{align*}
where the last inequality again follows from the standard estimate $\norm{u_h^G}_{1,p} \leq C\norm{u}_{1,p}$ for the Galerkin solution.

 
 \end{proof}
 \end{theorem}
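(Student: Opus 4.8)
The plan is to run a first-Strang-lemma argument built on the uniform coercivity \eqref{eq:inf-sup} of the discrete form $a_h + r_h$ on $V_h$ together with the reaction-term consistency estimates already proved. First I would observe that the Galerkin solution satisfies $a(u_h^G,v_h) + r(u_h^G,v_h) = \langle f, v_h\rangle$ for all $v_h\in V_h$, while the quadrature solution satisfies $a_h(u_h,v_h) + r_h(u_h,v_h) = \langle f,v_h\rangle$; subtracting and setting $w_h := u_h^G - u_h \in V_h$ gives
\[ a_h(w_h,v_h) + r_h(w_h,v_h) = \big[a(u_h^G,v_h) - a_h(u_h^G,v_h)\big] + \big[r(u_h^G,v_h) - r_h(u_h^G,v_h)\big] \]
for every $v_h\in V_h$. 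Applying \eqref{eq:inf-sup} on the left and dividing by $\norm{v_h}_1$ reduces the estimate for $\norm{u_h^G - u_h}_1$ to bounding the two consistency terms on the right, uniformly in $v_h$.

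For the $O(h)$ estimate I would bound $r(u_h^G,v_h) - r_h(u_h^G,v_h)$ by Lemma \ref{lemma:H1consistency}(ii) and $a(u_h^G,v_h) - a_h(u_h^G,v_h)$ by the first line of \eqref{eq:assumptions_on_discrete_transport_operator}, which together give $c_0\norm{u_h^G - u_h}_1 \leq Ch\norm{u_h^G}_1$. Combining this with the standard $H^1$-stability $\norm{u_h^G}_1 \leq C\norm{u}_1$ of the Galerkin solution (a consequence of coercivity of $a + r$ applied in the exact weak form) gives the first claim. The bound on $\norm{u - u_h}_1$ then follows from the triangle inequality $\norm{u - u_h}_1 \leq \norm{u - u_h^G}_1 + \norm{u_h^G - u_h}_1$ together with Céa's lemma, $\norm{u - u_h^G}_1 \leq C\inf_{v_h\in V_h}\norm{u - v_h}_1$.

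The supercloseness estimate uses the same computation with the two consistency terms estimated more sharply: the reaction term by Theorem \ref{thm:superconvergence_lemma}, whose hypothesis $|K_h| = O(h)$ is supplied by Assumption \ref{assumption:lipschitz_boundary_assumption}, and the transport term by the second line of \eqref{eq:assumptions_on_discrete_transport_operator}, each contributing a term of order $h^{\frac{3p-2}{2p}}\norm{u_h^G}_{1,p}\norm{v_h}_1$. Dividing by $\norm{v_h}_1$ and invoking the $W^{1,p}$-stability of the Galerkin solution, $\norm{u_h^G}_{1,p} \leq C\norm{u}_{1,p}$, closes the argument.

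I expect the only non-routine point to be this last ingredient, the $W^{1,p}$-stability (quasi-optimality in the $W^{1,p}$-norm) of the Galerkin projection for $2 < p \leq \infty$: unlike the $H^1$ case it is not an energy estimate but relies on discrete regularity/resolvent or weighted-norm techniques for the elliptic operator $a + r$, and hence on the smoothness of $\alpha$, $\bbeta$, $\kappa$, the regularity of $\Omega$, and the existence of an interpolation operator with property \eqref{eq:interpolation_operator} (thus on mesh non-degeneracy). The remaining steps are routine bookkeeping; the one thing worth verifying explicitly is that the consistency bounds are invoked only with arguments in $V_h$ (namely $u_h^G$ and $v_h$), which is precisely the setting of Lemma \ref{lemma:H1consistency} and Theorem \ref{thm:superconvergence_lemma}.
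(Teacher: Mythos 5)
Your proposal is correct and follows essentially the same route as the paper: coercivity \eqref{eq:inf-sup} applied to $u_h^G - u_h$, reduction to the consistency terms $a - a_h$ and $r - r_h$ evaluated at $u_h^G$, bounded by Lemma \ref{lemma:H1consistency} and \eqref{eq:assumptions_on_discrete_transport_operator} for the $O(h)$ result and by Theorem \ref{thm:superconvergence_lemma} for the supercloseness result, followed by stability of the Galerkin solution and C\'ea's lemma. Your closing remark correctly flags the $W^{1,p}$-stability of the Galerkin projection as the one ingredient the paper invokes as ``standard'' without proof.
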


 \begin{remark}
     Clearly, if the Galerkin solution is superconvergent --- that is, there is a constant $C(u)$ such that $\norm{u^G_h - I_h u}_{1} \leq C(u)h^{1 + s}$ for some $s > 0$ --- then Theorem \ref{thm:H1_reaction_diffusion_convergence} implies that $u_h$ is also superconvergent up to a maximum order of $h^{\frac{3p - 2}{2p}}$ when $u\in W^{1,p}(\Omega)$. If the reaction coefficient is smooth instead of discontinuous, then we obtain the optimal result $\norm{u_h^G - u_h}_1 \leq Ch^2\norm{u}_1$. Therefore, Theorem  \ref{thm:H1_reaction_diffusion_convergence} reveals that we expect to lose at least half an order of accuracy in the discontinuous case if our result is sharp. We provide numerical evidence to support the theoretical $O(h^{\frac{3}{2}})$ rate in Section \ref{section:numerics}.
 \end{remark}

 The next theorem, which gives an $L^2$-estimate, further requires that the solution $u\in H^2(\Omega)$, and that the adjoint problem $$a(v,w_\chi) = \left<\chi,v\right> \qquad \text{for all } v\in H^1_D(\Omega)$$ is $L^2$-regular in the sense that if $\chi\in L^2(\Omega)$, then $w_\chi\in H^2(\Omega)$ and there exists a constant $C > 0$ such that $\norm{w_\chi}_2 \leq C|\chi|_0$. We will also assume the existence of an interpolation operator $I_h$ satisfying \eqref{eq:interpolation_operator}, which may require, e.g., the nondegeneracy condition \eqref{eq:nondegeneracycondition}.

\begin{theorem}\label{thm:abstract_L2_convergence_theorem}
Assume the hypotheses stated above, i.e., that $u\in H^2(\Omega)$ and that the adjoint problem is $L^2$-regular. Then, there exists $C > 0$ such that $$|u - u_h|_0 \leq Ch^2\norm{u}_2.$$



 \begin{proof}
     Let $w_\chi$ be the solution of the adjoint problem for $\chi$. Then we have \begin{align*}
     \left<u_h - u , \chi\right> &= a(u_h - u , w_\chi) \\&= a(u_h - u, w_\chi - I_hw_\chi) + a(u_h - u ,I_hw_\chi)
     \\&= E_1 + a(u_h - u,I_hw_\chi) - a_h(u_h,I_hw_\chi) + a(u,I_hw_\chi) 
\\&= E_1 + a(u_h ,I_hw_\chi) - a_h(u_h,I_hw_\chi) 
\\&\leq \norm{u - u_h }_1\norm{w - I_hw_\chi}_1 + Ch^2\norm{u}_{2}\norm{w}_2 \\&\leq Ch^2\norm{u}_2\norm{w}_2.
\\&\leq Ch^2 \norm{u}_2|\chi|_0.
     \end{align*}
\end{proof}

 \end{theorem}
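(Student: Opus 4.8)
The natural approach is an Aubin–Nitsche duality argument in which the quadrature contribution is absorbed using the refined $O(h^2)$ consistency bound of Corollary~\ref{corollary:final_global_error_estimate} together with its transport analogue, the third line of~\eqref{eq:assumptions_on_discrete_transport_operator}. Write $b = a + r$ and $b_h = a_h + r_h$, so that~\eqref{eq:reaction-diffusion-parabolic-weak} reads $b(u,v) = \langle f, v\rangle$ for $v\in H^1_D(\Omega)$ and~\eqref{eq:reaction-diffusion-parabolic-weak-fem-quadrature} reads $b_h(u_h,v_h) = \langle f,v_h\rangle$ for $v_h\in V_h$; note $b$ is bounded on $H^1_D(\Omega)\times H^1_D(\Omega)$ since $\kappa\in L^\infty(\Omega)$ and $\alpha,\bbeta$ are continuous. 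For $\chi\in L^2(\Omega)$ let $w = w_\chi\in H^1_D(\Omega)$ solve the adjoint problem $b(v,w) = \langle\chi,v\rangle$ for all $v\in H^1_D(\Omega)$. I would set the dual problem against the full form $b$ (rather than $a$ alone), precisely so the reaction contribution can be absorbed rather than closed circularly. The assumed $L^2$-regularity gives $w\in H^2(\Omega)$ with $\norm{w}_2\le C|\chi|_0$. Since $|u - u_h|_0 = \sup_{|\chi|_0=1}\langle u - u_h,\chi\rangle$, it suffices to bound $\langle u - u_h,\chi\rangle = b(u - u_h, w)$.

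First I would split $w = I_h w + (w - I_h w)$, with $I_h$ the interpolation operator of~\eqref{eq:interpolation_operator}. For the interpolation remainder, boundedness of $b$, the $O(h)$ energy estimate $\norm{u - u_h}_1\le Ch\norm{u}_2$ from Theorem~\ref{thm:H1_reaction_diffusion_convergence} (valid because $u\in H^2(\Omega)$), and the interpolation bound $\norm{w - I_h w}_1\le Ch|w|_2$ give
\[ |b(u - u_h,\, w - I_h w)| \;\le\; C\norm{u - u_h}_1\norm{w - I_h w}_1 \;\le\; Ch^2\norm{u}_2\norm{w}_2 . \]
For the remaining term, since $I_h w\in V_h$ is an admissible test function, I would subtract the exact and discrete equations, $b(u, I_h w) = \langle f, I_h w\rangle = b_h(u_h, I_h w)$, whence
\[ b(u - u_h,\, I_h w) \;=\; b_h(u_h, I_h w) - b(u_h, I_h w) \;=\; [a_h - a](u_h, I_h w) + [r_h - r](u_h, I_h w), \]
i.e.\ the transport and reaction consistency errors evaluated on the pair $(u_h, I_h w)$.

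The crux — and the step I expect to require the most care — is checking that $(u_h, I_h w)$ meets the hypotheses under which Corollary~\ref{corollary:final_global_error_estimate} and the third line of~\eqref{eq:assumptions_on_discrete_transport_operator} deliver an $O(h^2)$ bound, namely the asymptotic error estimates~\eqref{eq:asympotic_error_estimates}. For $u_h$ this is exactly the $O(h)$ estimate $\norm{u - u_h}_1\le Ch\norm{u}_2$ already invoked; for $I_h w$ it is the interpolation estimate~\eqref{eq:interpolation_operator} with $k=2$, $m=1$ applied to $w\in H^2(\Omega)$. Granting this, Corollary~\ref{corollary:final_global_error_estimate} bounds $|[r - r_h](u_h, I_h w)|$ by $Ch^2\norm{u}_2\norm{w}_2$ and~\eqref{eq:assumptions_on_discrete_transport_operator} bounds $|[a - a_h](u_h, I_h w)|$ by $Ch^2\norm{u}_2\norm{w}_2$. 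Adding the two pieces, using $\norm{w}_2\le C|\chi|_0$, and taking the supremum over $|\chi|_0 = 1$ yields $|u - u_h|_0\le Ch^2\norm{u}_2$. Beyond this bookkeeping — keeping the reaction and transport pieces separate, ensuring $I_h w$ (not $w$) is the test function, and chaining Theorem~\ref{thm:H1_reaction_diffusion_convergence} with the interpolation estimate — no genuinely new ingredient is needed, since all the analytic weight sits in the already-established refined consistency estimate.
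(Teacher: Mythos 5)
Your proposal is correct and follows essentially the same route as the paper: an Aubin--Nitsche duality argument that splits the dual solution as $I_h w_\chi + (w_\chi - I_h w_\chi)$, bounds the interpolation remainder via the $H^1$ error estimate, and converts the remaining term into the consistency error $[b_h-b](u_h, I_h w_\chi)$ controlled by Corollary~\ref{corollary:final_global_error_estimate} and the third line of \eqref{eq:assumptions_on_discrete_transport_operator}. If anything, your write-up is more careful than the paper's (which conflates $a$ with the full form $a+r$ in the adjoint problem and omits the verification that $(u_h, I_hw_\chi)$ satisfies \eqref{eq:asympotic_error_estimates}), but the underlying argument is identical.
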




 \subsection{Tensor products and a fully monotone scheme}\label{section:fully_monotone}

After their introduction at the beginning of Section \ref{section:basiconvergenceresults}, the fact that the discretized domain is a tensor product of simplical meshes instead of a single simplical mesh has mostly faded into the background, because the essential properties (e.g., \eqref{eq:voxel-identities}) satisfied by the dual mesh on the tensor product mesh are inherited from its components. In the same way, our goal in this section will be to formally show how to produce a quadrature rule for the drift-diffusion operator from quadrature rules defined on each tensor product component, such that the tensor product discretization is monotone (or more narrowly, produces an $M$-matrix) whenever the component discretizations are. Our prime example is the edge-average finite element (EAFE) method. We will keep our explanations brief and narrowly focused on describing the extension of EAFE and other monotone schemes to tensor product meshes. We refer the reader to \cite{xu_monotone_1999} and the references therein for details on EAFE and the concept of monotonicity.


A monotone scheme for discretizing the reaction-drift-diffusion operator produces a stiffness matrix $A$ such that $A^{-1}\mathbf{f}$ is a vector with nonnegative entries whenever $\mathbf{f}$ has nonnegative entries. A sufficient condition for monotonicity is that $A$ is an $M$-matrix. The finite element Laplacian matrix for $P^1$ simplical elements is an $M$-matrix, and therefore monotone, whenever its off-diagonals are nonnegative, leading to the mesh condition described in \cite{xu_monotone_1999}. If the finite element Laplacian is an $M$-matrix, then the EAFE approximation of the drift-diffusion operator is also an $M$-matrix, and it remains so when perturbed by a nonnegative diagonal approximation of the reaction operator. In \cite{xu_monotone_1999}, the nonnegative diagonalization of the reaction operator is done using mass lumping, whereas here we have introduced our new quadrature rule \eqref{eq:quadrature_scheme} which can be applied for piecewise smooth functions.

\begin{remark}
    The mesh condition mentioned above is more restrictive in 3D than in 2D, since for the latter it suffices for the triangulation to be Delaunay. A subject for future research will be to investigate combining meshing techniques (e.g., using cut cells and related ideas) with modifications to EAFE to more easily produce an $M$-matrix even in the 3D case.
\end{remark}

We now extend the monotonicity property of the drift-diffusion operator to tensor products. To illustrate the basic idea, we return to the notation of Section \ref{section:basiconvergenceresults} and assume that the diffusion coefficient has the diagonal form $$\alpha(x) = \begin{pmatrix}
    \alpha_1(x)I_{d_1} &             &        & \\
                & \alpha_2(x)I_{d_2} &        & \\
                &             & \ddots & \\
                &             &        & \alpha_J(x) I_{d_J}
\end{pmatrix}\qquad \bbeta = \left[\bbeta_1(x),\bbeta_2(x),...,\bbeta_J(x)\right],$$ where $I_{d_j}$ is the $d_j\times d_j$ identity matrix and $\bbeta_j(x)\in \RR^{d_j}$. For PBSRD systems, each $\alpha_j$ would represent the diffusivity for a particular particle. The Galerkin finite element discretization of the drift-diffusion operator reads: $$A = \sum_{j=1}^J M^1 \otimes M^2 \otimes ... \otimes M^{j-1} \otimes A^{j} \otimes M^{j+1} \otimes ... \otimes M^{J},$$ where $\otimes$ represents a Kronecker product, $A^{j}$ discretizes the drift-diffusion operator $\nabla_{x^{(j)}} \cdot [\alpha_j(x) \nabla_{x^{(j)}}u + u\bbeta_j(x)]$ on $\mathcal{T}_h^{(j)}$, and $M^j$  is the $P^1$ finite element mass matrix on $\mathcal{T}_h^{(j)}$. 

To produce a monotone scheme, we simply replace each $M^i$ with its lumped counterpart, $\tilde{M}^i$, and replace the drift-diffusion operators $A^j$ with a quadrature approximation producing inverse-positive approximations $\tilde{A}^j$ of $A^j$. In terms of quadrature rules, the procedure described above involves expanding the drift-diffusion operator into a sum over component operators, and then applying mass lumping on each variable except those whose derivatives appear in that term. Below we give an example of such a derivation.

\begin{example}
When $J=2$, for example, we start with a two term expansion and then mass-lump each term in the variables which are not being differentiated: \begin{align*}
    \int_{\Omega_1 \times \Omega_2} &[\alpha(x,y) \nabla u_h(x,y) + u_h(x,y)\bbeta(x,y)] \cdot \nabla v_h(x,y)dxdy 
    \\&= \int_{\Omega_1 \times \Omega_2} [\alpha_1(x,y)\nabla_x u_h(x,y) + u_h(x,y)\bbeta_1(x,y)]\cdot \nabla_xv_h(x,y)dxdy\nonumber \\&+ \int_{\Omega_1 \times \Omega_2} [\alpha_2(x,y)\nabla_y u_h(x,y) + u_h(x,y)\bbeta_2(x,y)]\cdot \nabla_y(x,y)dxdy
    \\&\approx \sum_{i=1}^{N_2} |V_i^{(2)}|\int_{\Omega_1} [\alpha_1(x,x_i^{(2)})\nabla_x u_h(x,x_i^{(2)}) + u_h(x,x_i^{(2)})\bbeta_1(x,x_i^{(2)})]\cdot \nabla_x v_h(x,x^{(2)}_i)dx \\&+ \int_{\Omega_2} \sum_{i=1}^{N_1} |V_i^{(1)}|[\alpha_2(x_i^{(1)},y)\nabla_y u_h(x_i^{(1)},y) + u_h(x_i^{(1)},y)\bbeta_2(x_i^{(1)},y)]\cdot \nabla_y v_h(x_i^{(1)},y)dy
\end{align*}
Then, we apply mesh-dependent monotone discretizations $a^{(1)}_h$ and $a^{(2)}_h$ to the resulting component drift-diffusion operators: 

\begin{align*}
&\sum_{i=1}^{N_2} |V_i^{(2)}|\int_{\Omega_1} [\alpha_1(x,x_i^{(2)})\nabla_x u_h(x,x_i^{(2)}) + u_h(x,x_i^{(2)})\bbeta_1(x,x_i^{(2)})]\cdot \nabla_x v_h(x,x^{(2)}_i)dx \\&+ \int_{\Omega_2} \sum_{i=1}^{N_1} |V_i^{(1)}|[\alpha_2(x_i^{(1)},y)\nabla_y u_h(x_i^{(1)},y) + u_h(x_i^{(1)},y)\bbeta_2(x_i^{(1)},y)]\cdot \nabla_y v_h(x_i^{(1)},y)dy
\\&\approx \sum_{i=1}^{N_2} |V_i^{(2)}|a_h^{(1)}\left(u_h(\cdot ,x_{i}^{(2)}),v_h(\cdot, x_i^{(2)})\right) + \sum_{i=1}^{N_1} |V_i^{(1)}|a_h^{(2)}\left(u_h(x_{(i)}^{(1)}, \cdot),v_h(x_{i}^{(1)},\cdot)\right).
\end{align*}
Note that a similar derivation to the one here appears in \cite{heldman_convergent_2024} in the context of PBSRD simulation. 
\end{example}

Our discretization for the drift-diffusion operator is easily shown to be monotone, since it produces a block-diagonal matrix with monotone blocks. The stiffness matrix is also an $M$-matrix if the blocks are $M$-matrices. One can also easily show that if the block discretizations $a_h^{(i)}$ satisfy appropriate consistency error estimates \eqref{eq:assumptions_on_discrete_transport_operator}, then so do the bilinear forms associated with the tensor-product problem.

For the EAFE method that we use for our numerical examples in Section \ref{section:numerics}, only the first estimate from \eqref{eq:assumptions_on_discrete_transport_operator} has been proven in arbitrary dimensions \cite{xu_monotone_1999}. Other works (e.g., \cite{miller_scharfetter-gummel_1991}) have given numerical evidence that the EAFE method is second order accurate in the $L^2$-norm in two dimensions when the advection coefficient $\bbeta$ is a potential field, i.e., $\bbeta = \nabla \psi$ for some $\psi \in H^1(\Omega)$. We also mention here that in our own numerical experiments, when $\bbeta = \nabla \psi$ the local errors (quadrature errors on each element) are $O(h)$, which is the result shown in \cite{markowich_inverse-average-type_1988,xu_monotone_1999}. The global errors, however, are empirically $O(h^2)$. Finally, in one dimension one can actually prove stronger convergence results for EAFE along the lines of \eqref{eq:assumptions_on_discrete_transport_operator} \cite{ilin_differencing_1969,lazarov_finite_1996}. This implies that EAFE is second-order accurate when applied dimension-by-dimension on a structured grid as described above, which extends the results of, e.g., \cite{ilin_differencing_1969,lazarov_finite_1996} for one- and two-dimensional problems on structured grids. We leave it to future work to show that, in the general case, EAFE satisfies all of the estimates \eqref{eq:assumptions_on_discrete_transport_operator}.

\section{Implementation and numerics}\label{section:numerics}

For our numerical examples, we consider a drift-diffusion-reaction equation \eqref{eq:reaction-diffusion-parabolic-strong} with the following form: \begin{equation}\label{eq:numerical_example}
\begin{aligned}
    -\nabla \cdot [\nabla u(x) + u(x)\nabla \psi(x)] + \overline{\lambda} e^{-\psi(x)} u(x) 1_{K}(x) &= f(x) \\
    [\nabla u(x) + u(x)\nabla \psi(x)]  \cdot \bn_\Omega(x) &= 0.
\end{aligned}
\end{equation}
In the context of PBSRD models, \eqref{eq:numerical_example} is a simplified, forced version of the steady-state equations for a two-particle system with a single reversible reaction. See \cite{heldman_convergent_2024} for details.

In the notation used earlier in the paper, we have $\lambda(x) = \overline{\lambda} e^{-\psi(x)} $ and $\bbeta(x) = \nabla \psi(x)$. We also specify $\Omega = B_{1}(0)$ to be the unit ball in $\RR^d$, $K = B_{r^*}(0)$, $f(x) = 1$, and $\psi(x) = \overline{\kappa}|x|^2.$ We chose these parameters so that \eqref{eq:numerical_example} is radially symmetric, and therefore the solution $u(x) = u_d(x)$ to \eqref{eq:numerical_example} also solves the ODE \begin{equation}\label{eq:ode_sol}
\begin{aligned}
    -\frac{1}{r^{d-1}}\frac{d}{dr} r^{d-1} [u'(r) + u(r) \psi'(r)] + \overline{\lambda} e^{-\psi(r)} u(r) 1_{r < r^*} &= f(r) \\
    r'(0) = r'(1) &= 0.   
\end{aligned}
\end{equation}
We consider two test cases: a comparison between two different splittings of $\lambda$ with the exact solution computed from \eqref{eq:ode_sol}, and a comparison with the Galerkin solution in the $L^2$ and $H^1$ norms to test the superconvergence rate of $O(h^{\frac{3}{2}})$ predicted by Theorem \ref{thm:superconvergence_lemma}.

\subsection{Comparison with exact solution}

\begin{figure}[ht]
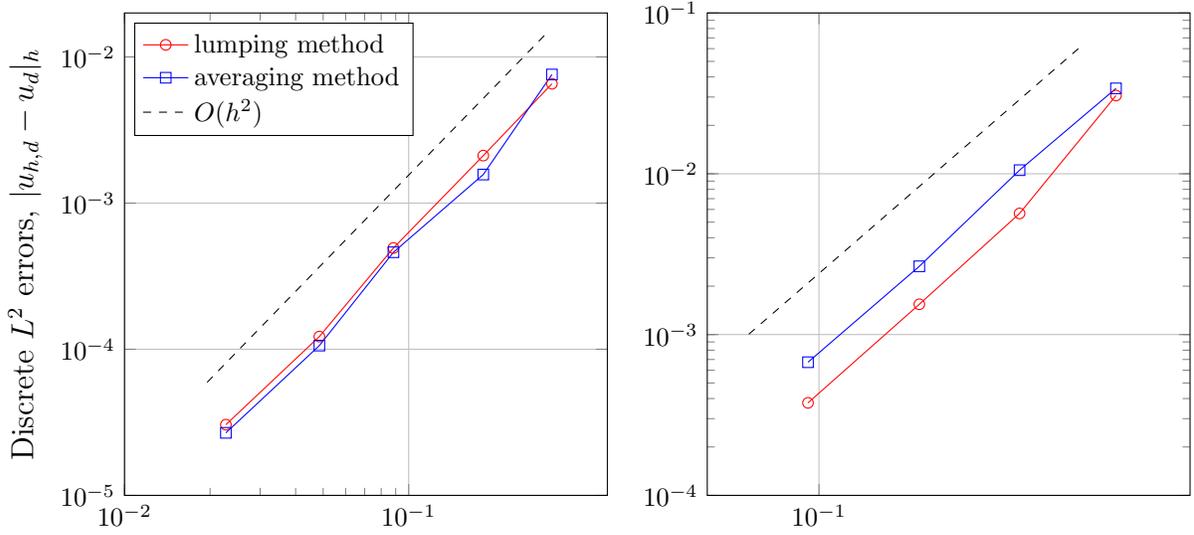

    \centering
\begin{tikzpicture}\input{sections2/conv-plots/convergenceplot1-2d}\end{tikzpicture}\hspace{10pt}\begin{tikzpicture}\input{sections2/conv-plots/convergenceplot1-3d}\end{tikzpicture}
    \caption{Convergence rate of the reaction-drift-diffusion finite element solution computed using the lumping method (red line, circular) markers, and the averaging method (blue line, square markers), compared with an $O(h^2)$ rate of convergence, with $\overline{\kappa} = 1$. The parameter $\overline{\kappa}$, represents the strength of the potential force and also influences the magnitude of the gradient for the smooth part of the reaction kernel.}\label{fig:convergence-plots-1}
    \end{figure}

\begin{figure}[ht]
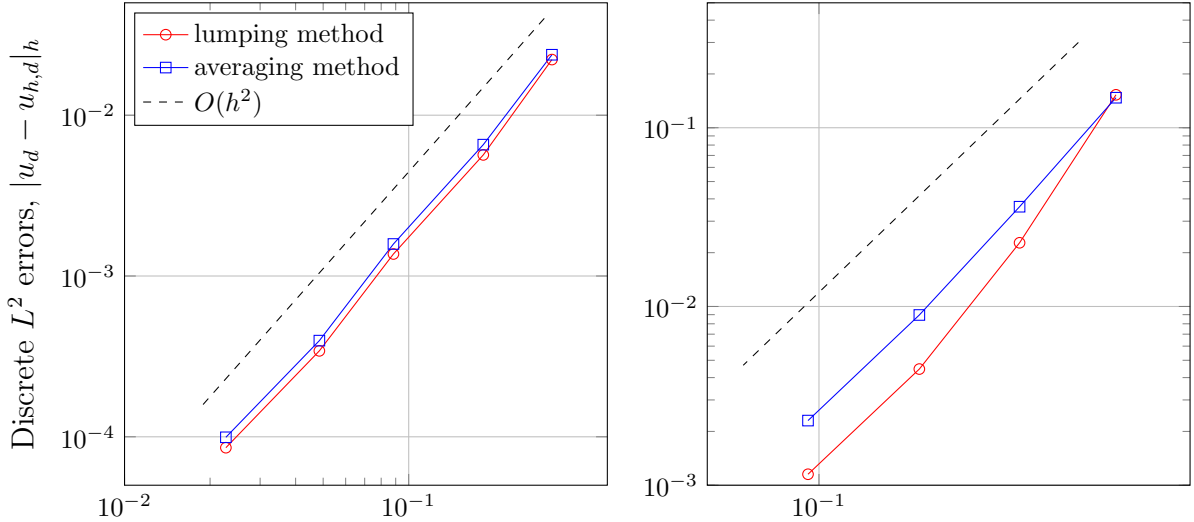

    \centering
\begin{tikzpicture}\input{sections2/conv-plots/convergenceplot2-2d}\end{tikzpicture}\hspace{10pt}\begin{tikzpicture}\input{sections2/conv-plots/convergenceplot2-3d}\end{tikzpicture}
    \caption{Convergence rate of the reaction-drift-diffusion finite element solution computed using the lumping method (red line, circular) markers, and the averaging method (blue line, square markers), compared with an $O(h^2)$ rate of convergence, with $\overline{\kappa} = 5$. The parameter $\overline{\kappa}$, represents the strength of the potential force and also influences the magnitude of the gradient for the smooth part of the reaction kernel.}\label{fig:convergence-plots-2}
    \end{figure}

To compute the results in Figure \ref{fig:convergence-plots-1} and \ref{fig:convergence-plots-2}, we computed a highly accurate approximation to the solution $u_d$ of \eqref{eq:ode_sol} with $d=2,3$ using a finite element discretization with 10,000 elements on $[0,1]$. Then, we compared the result with the finite element solution $u_{h,d}$ of \eqref{eq:numerical_example} for different values of $h$. We used a piecewise linear approximation to the disk and spherical domain boundaries, which introduces an additional $O(h^2)$ approximation error to $u_{h,d}$. To test the effect of the splitting $\lambda(x) = \lambda_1(x)\lambda_2(x)$ as discussed in Remark \ref{remark:suboptimalerrorestimates}, for each mesh we also compare the solution computed using two discretization strategies: first, we take $\lambda_1(x) = \lambda(x)$, $\lambda_2 \equiv 1$, which we will call the (voxel-)averaging method, and second, we set $\lambda_2(x) = \lambda(x)$, $\lambda_1\equiv 1$, which we call the (mass-)lumping method. In any case, the finite element discretization of \eqref{eq:numerical_example} leads to a matrix system of the form $(A + R)\mathbf{u}_{h,d} = \mathbf{f}_h,$ where $\mathbf{u}_{h,d}$ is the vector of coefficients for $u_{h,d}$, $A$ is the discrete transport operator, $R$ is the discrete reaction operator, and $\mathbf{f}_h$ is the right-hand side. The vectors and matrices have entries 
\begin{align}
    A_{ji} &= \begin{cases}
        \frac{\psi_j - \psi_i}{e^{\psi_j - \psi_i}}\int_{\Omega} \nabla \phi_i(x) \cdot \nabla \phi_j(x) dx & i\neq j,\\
        \sum_{k\neq i} -A_{ki} & i = j,\label{eq:EAFE}
    \end{cases} \\
    R_{ij} &= \begin{cases} 0 & i\neq j \\
                           \overline{\lambda}\int_{V_i \cap K} e^{-\psi(x)}dx & \text{Using the averaging method}, \\
                           \overline{\lambda}e^{-\psi_i}|V_i \cap K| & \text{Using the lumping method},\nonumber
            \end{cases}\\
    (\mathbf{f}_h)_i &= \int_{\Omega} f(x)\phi_i(x)dx = |V_i|.\nonumber
\end{align}
The entries of $A$ shown above were computed using a version of the edge-average finite element method \cite{xu_monotone_1999,markowich_inverse-average-type_1988}, mentioned earlier in Section \ref{section:reactiondiffusionequations}. The entries of $R$ are computed using the averaging or lumping method as follows: for the averaging method, the integrals $\overline{\lambda}\int_{V_i} e^{-\psi(x)}1_K(x) dx$ are computed using a Monte Carlo method on $V_i$ with $\lfloor\frac{1000}{h^2}\rfloor$ sample points. The algorithms we used to sample from the voxels $V_i$ for the Monte Carlo integration and to compute the intersection areas $|V_i \cap K|$ are implemented in a code developed for CRDME simulations available at \href{https://gitlab.com/mheldman/crdme}{https://gitlab.com/mheldman/crdme}. The sampling algorithm will be presented in a forthcoming paper \cite{heldman_rejection_2024}, while algorithms for computing the intersection between $B_{r^*}(0)$ and $V_i$ are described in \cite{isaacson_unstructured_2018} (2D version) and \cite{strobl_exact_2016,heldman_convergent_2024} (3D version). For other domains $K$, we expect that an analysis along the lines of, e.g., \cite{kashiwabara_penalty_2016,kashiwabara_finite_2023} can justify the approximation of $K$ by a piecewise linear function; then, the intersections between polyhedra/polygons can be computed using a discrete geometry library such as CGAL \cite{the_cgal_project_cgal_2023}. 
\begin{remark}
    We emphasize that we are not suggesting using Monte Carlo to compute the integrals is a good strategy. However, we are not aware of other good algorithms for computing general integrals like $\int_{V_i\cap K} \lambda_2(x)dx$ other than $h$-refinement, which already can be less efficient than Monte Carlo in three dimensions due to the curse of dimensionality. As we outline for $K = B_{r^*}(0)$ above, in the case $\lambda_2(x) \equiv 1$ the exact evaluation of each $|V_i \cap K|$ can be $O(1)$ (though still expensive), and so far this seems to be the most feasible use case for the method.
\end{remark}

 In Figure \ref{fig:convergence-plots-1} and \ref{fig:convergence-plots-2}, we observe $O(h^2)$ convergence of the discrete solution in both the 2D and 3D simulations, as measured in the relative discrete $L^2(\Omega)$-norm between $u_{h,d}$ and the true solution $u_{d}$: $$\frac{|u_d - u_{h,d}|_h}{|u_d|_h} = \sqrt{\frac{\mathcal{M}_h(|u_d - u_{h,d}|^2)}{\mathcal{M}_h(|u_d|^2)}} = \sqrt{\frac{\sum_{i=1}^N |V_i||u_d(x_i) - u_{h,d}(x_i)|^2}{\sum_{i=1}^N |V_i||u_d(x_i)|^2}}.$$ 
The solutions in Figure \ref{fig:convergence-plots-1} were computed with parameters $\overline{\lambda} = 5$, $\overline{\kappa} = 1$ and $r^* = \frac{\pi}{5}$, while in Figure \ref{fig:convergence-plots-2} we kept $\overline{\lambda}$ and $r^*$ the same and increased the force to $\overline{\kappa} = 5$ to test the effect of an increased gradient for $\lambda(x)$ on the errors for the averaging and lumping methods. Although for both methods the error increases with $\overline{\kappa} = 5$, the overall trend remains largely the same. Based on the data pictured below, there seems to be little advantage to be gained from using the averaging method instead of the lumping method despite the fact that the averaging method produces more optimistic error bounds (in terms of $W^{1,\infty}(\Omega)$-norm of $\lambda$ instead of the $W^{2,\infty}(\Omega)$ norm, see Remark \ref{remark:suboptimalerrorestimates}). On the other hand, the averaging method is well-defined for a wider class of functions. We leave a more detailed analysis comparing the two methods to future work.

\subsection{Supercloseness: comparison with Galerkin solution}

For a second test case, we simplify further to the pure reaction-diffusion problem ($\psi \equiv 0$), keeping $\bar{\kappa} = 5$. The discrete solution $u_{h,d}$ is computed using the same formulae and methodology described in the previous subsection, where we note that the averaging and lumping method coincide for $\psi \equiv 0$. We compare with the Galerkin solution $u^G_{h,d}$.

\begin{figure}[ht]
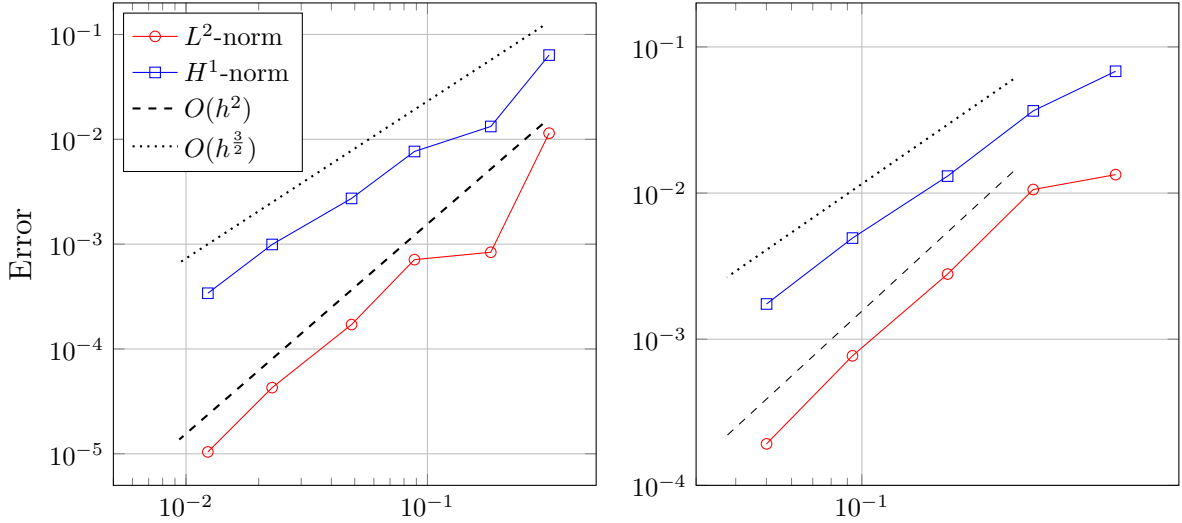

    \centering
\begin{tikzpicture}\input{sections2/conv-plots/convergenceplot1-2d-H1}\end{tikzpicture}\hspace{10pt}\begin{tikzpicture}\input{sections2/conv-plots/convergenceplot1-3d-H1}\end{tikzpicture}
    \caption{$L^2$-norms of $u_h - u_h^G$ (red line, circular markers) and $\nabla(u_h - u_h^G)$ (blue line, square markers) for the 2D (left) and 3D (right) problem, where $u_h^G$ is the Galerkin solution of the drift-diffusion equation and $u_h$ is computed using the finite-volume quadrature rule. The $H^1$ errors are $O(h^{\frac{3}{2}})$, a half-order reduction in the convergence rate compared to when the reaction coefficient is smooth.}\label{fig:convergence-plots-3}
    \end{figure}

To compute the Galerkin reaction operator $R_{ij} = r(\phi_i,\phi_j)$ requires us to evaluate $\int_{\Omega} \phi_i\phi_j 1_Kdx$ for each pair of basis functions $\phi_i,\phi_j$. We do so in the usual element-by-element way, incorporating the discontinuity by dividing the elements into three cases: 
\begin{enumerate}
    \item If each vertex $x_i$ of $T$ is contained in $B_{r^*}(0)$, then $T \subseteq B_{r^*}(0)$. Therefore, we use a quadrature formula to exactly compute $\int_{T} \phi_i\phi_j dx$ for each vertex pair $x_i,x_j\in T$.
    \item If each vertex $x_i$ of $T$ is in $B^c_{\bar{r}}(0)$, where $\bar{r} = \sqrt{(h/2)^2 + (r^*)^2}$ (i.e., $|x_i| > \bar{r}$), then $T\cap B_{r^*}(0) = \emptyset$. Therefore, we set $\int_{T} \phi_i\phi_j 1_Kdx = 0$ for all $x_i,x_j\in T$.
    \item Otherwise, we compute $\int_{T} \phi_i\phi_j 1_Kdx$ by Monte Carlo integration, again using $\lfloor\frac{1000}{h^2}\rfloor$ sample points.
\end{enumerate}
In Figure \ref{fig:convergence-plots-3}, we observe that the $L^2(\Omega)$ errors (again computed in the discrete norm $|\cdot |_h$) between $u_{h,d}$ and $u_{h,d}^G$ match to the optimal $O(h^2)$ accuracy, while in the $H^1(\Omega)$ norm the convergence order is reduced to $O(h^{\frac{3}{2}})$. Both of these rates match the theoretical predictions of Theorems \ref{thm:abstract_L2_convergence_theorem} and \ref{thm:superconvergence_lemma}; in particular, they provide evidence that the final estimate of Corollary \ref{corollary:final_global_error_estimate} is sharp in that, for example, the $H^2(\Omega)$-norm on $v$ cannot be replaced by an $H^1(\Omega)$-norm.

\section{Conclusion}\label{section:conclusion}

We have proven convergence estimates for a quadrature scheme for discontinuous integrands, which we applied to construct a monotone finite-element discretization of scalar reaction-drift-diffusion equations with discontinuous coefficients. The quadrature scheme was originally intended and successfully used for discretizing systems of reaction-drift-diffusion equations related to PBSRD simulation \cite{isaacson_convergent_2013,isaacson_unstructured_2018,isaacson_unstructured_2024,heldman_convergent_2024}, and our work justifies theoretically the empirical $O(h^2)$ convergence estimates observed in those works. More generally, our work shows that incorporating averaging-type quadrature schemes into low-order finite-element methods for PDEs with irregular coefficients can recover additional orders of convergence beyond what can be recovered using a traditional element-by-element analysis of the quadrature errors.

Several possible modifications and extensions of our work, some of which are mentioned earlier in the paper, include:

\begin{itemize}
    \item Rigorous comparisons of the averaging and lumping schemes.
    \item Exploration of alternative definitions of the dual mesh such as \eqref{eq:nodal-support}.
    \item Proof of second-order accuracy for the EAFE discretization.
    \item Incorporation of our results into CRDME convergence proofs (i.e., convergence of the discretization described in \cite{heldman_convergent_2024}).
    \item Extension of our results to include irregularities in other coefficients, such as the drift coefficient $\bbeta$.
\end{itemize}

\appendix

\section*{Appendix} \label{section:appendix}

\begin{proof}[Proof of Lemma \ref{lemma:characterizationofminimizers}]
    Up to a rotation of the domain, there exists a Lipschitz continuous parametrization $(y,\varphi(y))$, $y\in V\subseteq \mathbb{R}^{d-1}$ of $\partial K$ around $P_K(x) $ such that $V\cap \Omega = \{(y,y_n) : y_n < \varphi(y)\} $. We have that $P_K(x) = (y^*,\varphi(y^*))$, where $$y^* = \argmin_{y\in V} \Phi(y) :=  \frac{1}{2}\norm{\left<y, \varphi(y)\right> - x}^2.$$ Since $\Phi$ is Lipschitz continuous and $y^*$ is a critical point, we have that $0 \in \partial \Phi(y^*)$, where \begin{align*}\partial \Phi(y^*) = \conv \left\{ \lim_{i\to\infty} \nabla \Phi(y_i) :  \text{$(y_i)_{i=1}^\infty\subseteq W$, $y_i\to y^*$, $\nabla \Phi(y_i)$ converges}\right\}\end{align*} is the Clarke subdifferential of $\Phi$ at $y^*$ with $W$ denoting the set of points in $V$ where $\Phi$ is differentiable  (see \cite{clarke_optimization_1990}, p. 27 for the definition of the Clarke subdifferential, Theorem 2.5.1 for the relevant characterization, and Proposition 2.3.2 for the relationship to local extrema). Now, at any $y$ such that $\varphi$ is differentiable, the gradient of $\Phi$ is given by $$\nabla\Phi(y) = y - \tilde{x} + \nabla\varphi(y)(\varphi(y) - x_n),$$ where we use $\tilde{x}$ to denote the projection of $x$ into $\mathbb{R}^{n-1}$. Using this formula for the gradient and the characterization of $\partial \Phi(y^*)$ given above, the inclusion $0\in \partial \Phi(y^*)$ can be rewritten as $$\tilde{x} - y^* = \sum_{j=1}^m \beta_j (\lim_{i\to \infty} -\nabla \varphi(y_i))(x_n - \varphi(y^*))$$ for some scalars $\beta_j \geq 0$ with $\sum_{j=1}^m \beta_j = 1$. Using the fact that $n_K(y_i,\varphi(y_i)) = \frac{\left<-\nabla \varphi(y_i),1\right>}{\sqrt{\norm{\nabla \varphi(y_i)}^2 + 1}}$, the above is evidently equivalent to $$x - P_K(x) = (x_n - \varphi(y^*))\sum_{j=1}^m\beta_j\lim_{i\to\infty} \left<-\nabla \varphi(y_i),1\right> = \sum_{j=1}^m\beta_jc_j\lim_{i\to\infty} n_K(y_i,\varphi(y_i)),$$ where $c_j = \lim_{i\to\infty} \sqrt{\norm{\nabla \varphi(y_i)}^2 + 1}$. That the righthand side is nonzero follows from the fact that $x - P_K(x) \neq 0$. Therefore, setting $\alpha_j = \beta_j c_j$ and normalizing both sides yields the result.
\end{proof}

\bibliographystyle{plain}
\bibliography{convergence.bib}

\end{document}